% arara: pdflatex
% arara: bibtex
% arara: pdflatex
% arara: pdflatex
% Version 3 with (I hope) a better introduction and more ready for publication.
% The date is 22.01.23.

% Version 4
% Author: TUKR
% Updates: Added literature review, corrected typos, reviewed until Section 2

% Version 5
% Author: TUKR
% Updates:
% 1) Restored contents from V8-ICML-NC.
% 2) Brought up Literature Review.
% 3) Pulled back the proofs and removed Appendix.

% Arxiv version of nonconvex optimization paper, with computational results.
% The date is 05.01.2023 (Niyuddh's Sixth Birthday!)

\documentclass[11pt]{article}

\setlength{\textwidth}{165mm}
\setlength{\textheight}{225mm}
\setlength{\oddsidemargin}{-5mm}
\setlength{\topmargin}{-5mm}

\usepackage{multirow}
\usepackage{amssymb,amsmath,amsthm}
\usepackage{color}
\usepackage{graphicx}
  % \graphicspath{{./Figures/}}
  \DeclareGraphicsExtensions{.jpeg,.png,.jpg,.eps,.pdf}
\usepackage{tikz}
\usepackage{epstopdf}
\usepackage{multicol}
\usepackage{multirow}
\usepackage{hyperref}

\usepackage{todonotes}

% This file contains the notation for the book on hidden Markov processes.
% Latest update on 15.08.09, rearranging, grouping, and getting rid of
% unnecessary definitions.

% Bold face Roman symbols

\def\eb{{\bf e}}
\def\f{{\bf f}}

\def\v{{\bf v}}

\def\x{{\bf x}}
\def\y{{\bf y}}
\def\z{{\bf z}}

% Calligraphic symbols

\def\B{{\cal B}}
\def\C{{\cal C}}

\def\F{{\cal F}}

\def\M{{\cal M}}

% Blackboard style symbols

\def\R{{\mathbb R}}

% Abbreviations for greek letters

\def\al{\alpha}
\def\d{\delta}

\def\e{\epsilon}
\def\g{\gamma}

\def\l{\lambda}

\def\om{\omega}
\def\OM{\Omega}
\def\r{\rho}
\def\s{\sigma}
\def\SI{\Sigma}
\def\t{\tau}
\def\th{\theta}

% Bold greek letters

\def\bth{{\boldsymbol \theta}}
\def\bphi{{\boldsymbol \phi}}
\def\bpsi{{\boldsymbol \psi}}

\def\bxi{{\boldsymbol \xi}}
\def\bzeta{{\boldsymbol \zeta}}

% Bars, hats, tildes etc.

\def\cub{\bar{c}}
\def\clb{\underline{c}}

\def\Jb{\bar{J}}

% Approaching infinity

\def\tai{t \ap \infty}

% Miscellaneous mathematical symbols

\def\ap{\rightarrow}

\def\bi{\{0,1\}}

\def\bz{{\bf 0}}
\def\imp{\; \Longrightarrow \;}

\def\fa{\; \forall}

\def\half{\frac{1}{2}}

\def\as{\mbox{ a.s.}}

\def\nm{\Vert}

\renewcommand{\and}{\mbox{$\wedge$}}

% Typesetting commands

\newcommand{\bc}{\begin{center}}
\newcommand{\ec}{\end{center}}
\newcommand{\be}{\begin{equation}}
\newcommand{\ee}{\end{equation}}
\newcommand{\bd}{\begin{displaymath}}
\newcommand{\ed}{\end{displaymath}}
\newcommand{\ba}{\begin{array}}
\newcommand{\ea}{\end{array}}
\newcommand{\ben}{\begin{enumerate}}
\newcommand{\een}{\end{enumerate}}
\newcommand{\bit}{\begin{itemize}}
\newcommand{\eit}{\end{itemize}}
\newcommand{\beq}{\begin{eqnarray}}
\newcommand{\eeq}{\end{eqnarray}}
\newcommand{\btab}{\begin{tabular}}
\newcommand{\etab}{\end{tabular}}
\newcommand{\bfig}{\begin{figure}}
\newcommand{\efig}{\end{figure}}
\newcommand{\btp}{\begin{tikzpicture}}
\newcommand{\etp}{\end{tikzpicture}}

% Environment commands

% \newtheorem{corollary}{Corollary}{\bf}{\it}
% \newtheorem{definition}{Definition}{\bf}{\it}
% \newtheorem{example}{Example}{\bf}{\rm}
% \newtheorem{lemma}{Lemma}{\bf}{\it}
% \newtheorem{theorem}{Theorem}{\bf}{\it}
% \newtheorem{proposition}{Proposition}{\bf}{\it}
% \newtheorem{conjecture}{Conjecture}{\bf}{\it}
% \newtheorem{problem}{Problem}{\bf}{\rm}

% Additional commands for compressed sensing papers

% \DeclareMathOperator*{\argmin}{arg\,min}
% \DeclareMathOperator*{\argmax}{arg\,max}

% Norm symbols

\newcommand{\nmeu}[1]{ \nm #1 \nm_2 }
\newcommand{\nmeusq}[1]{ \nm #1 \nm_2^2 }

\def\gJ{\nabla J}
\def\bths{\bth^*}
\def\bphit{\tilde{\bphi}}

% Ordinary and partial derivatives symbols.

% Quadratic variation symbol

% Inner product symbol
\newcommand{\IP}[2]{ \langle #1 , #2 \rangle }

% Combinatorial parameter symbol

% Legendre symbol

% \newcommand{\halmos}{\hfill $\Box$}

% Basis pursuit symbol

% \newcommand{\det}{{\rm det}}

% \newcommand{\mod}{\rm{ mod }}

% Compressed Sensing symbols.

\def\nmsl1{\nm_{{\rm SL1}}}

% Special notation for this paper.
%\newcommand{\bths}{\bth^*}

\def\gJ{\nabla J}
\def\clb{\underline{c}}
\def\cub{\bar{c}}

{\bf}{\it}
{\bf}{\it}
\newtheorem{example}{Example}{\bf}{\rm}
\newtheorem{lemma}{Lemma}{\bf}{\it}
\newtheorem{theorem}{Theorem}{\bf}{\it}
{\bf}{\it}
{\bf}{\it}
{\bf}{\rm}

\begin{document}

% This is the paper batch stochastic gradient descent for ICML 2023.
% The date is 22.01.23.

\title{
Convergence of Batch Updating Methods with \\
Approximate Gradients and/or Noisy Measurements: \\
Theory and Computational Results
}

\author{Tadipatri Uday Kiran Reddy and M.\ Vidyasagar
\thanks{
TUKR is an undergraduate in Electrical Engineering at the
Indian Institute of Technology Hyderabad, Kandi, Telangana 502284, India;
Email: ee19btech11038@iith.ac.in;
MV is SERB National Science Chair and Distinguished Professor,
Indian Institute of Technology Hyderabad, Kandi, Telangana 502284, India;
Email: m.vidyasagar@iith.ac.in.
The research of MV is supported by the Science and Engineering Research Board
(SERB), India.
}
}

\date{\today}

\maketitle

\begin{abstract}

In this paper, we present a unified and general framework for analyzing
the batch updating approach to nonlinear, high-dimensional optimization.
The framework encompasses all the currently used batch updating approaches,
and is applicable to nonconvex as well as convex functions.
Moreover, the framework permits the use of noise-corrupted gradients,
as well as first-order approximations to the gradient (sometimes referred
to as ``gradient-free'' approaches).
By viewing the analysis of the iterations as a problem in the
convergence of stochastic processes, we are able to establish a very
general theorem, which includes most known convergence results for
zeroth-order and first-order methods.
The analysis of ``second-order'' or momentum-based methods is not a part
of this paper, and will be studied elsewhere.
However, numerical experiments indicate that momentum-based methods
can fail if the true gradient is replaced by its first-order approximation.
This requires further theoretical analysis.
\end{abstract}

\section{Introduction}\label{sec:Intro}

\subsection{Scope of the Paper}\label{ssec:11}

In this paper we consider the general problem of minimizing a 
possibly nonconvex $C^1$ function $J : \R^d \ap \R$,
using batch updating of the argument.
In our nomenclature, batch updating refers to selecting \textit{some}
of the $d$ components
of the current guess $\bth_t$ at each time $t$, and then updating these 
components along a randomly chosen ``search direction'' $\bphi_{t+1}$.
% In Section \ref{ssec:12}, we review some of the current
% approaches to batch updating, but here is a brief review.
Usually the search direction $\bphi_{t+1}$ is an approximation
to the negative gradient $-\gJ(\bth_t)$.
When the dimension $d$ of the parameter $\bth$ is very large,
computing the complete gradient
$\gJ(\bth_t)$ and/or updating every component of $\bth_t$
at every iteration can be
extremely resource-intensive  in both storage and CPU time.
For this reason, batch updating
(under whatever name) has become the \textit{de facto}
approach for large-scale minimization.
This approach goes under a variety of names in the
optimization and deep learning research communities,
including Coordinate Gradient Descent, 
(Batch) Stochastic Gradient Descent, (Randomized) Block Coordinate Descent, etc.
Further, much of the theory is developed for convex optimization.
However, in problems of deep learning, the objective function
is definitely not convex.
Thus, ideally, any approach should also be applicable to nonconvex
optimization.
There are further considerations as well.
When methods such as back-propagation are used to compute
the gradient $\gJ(\bth)$, computing \textit{only some components}
of $\gJ(\bth)$ does not necessarily result in significant savings
in computation.
On the other hand, function evaluation is much simpler than gradient
evaluation.
Therefore approximating some components of the gradient $\gJ(\bth_t)$
using first-order differences is a feasible approach.
(This approach also has a variety of names, such as zeroth-order
or derivative-free.)
While doing so, it would be realistic to permit some measurement
errors, that is, random disparities between the desired outputs of
the computation, and the actual computation.

In this paper, we propose a very general framework that encompasses
all the known batch updating paradigms.
Then we state and prove a very general result on the convergence of
batch updating, provided two specific conditions are satisfied.
It is shown that all widely used batch updating protocols do indeed
satisfy these conditions; therefore most available convergence results
for batch updating are included as special cases of our general result.
Moreover, the framework provides a readymade method of proving the
convergence of batch updating for protocols to be developed in future.

The analysis given here is applicable to methods that use true gradients
(possibly corrupted by noise) and approximate gradients (zeroth-order
or gradient-free methods).
Another popular approach to optimization is to use momentum-based methods,
wherein the current search direction is a linear combination of the
current gradient and the immediately preceding gradient (or noisy
versions thereof).
Methods such as Polyak's Heavy Ball method \cite{Polyak-cmmp64},\
Nesterov's accelerated method
\cite{Nesterov-Dokl83,Nesterov04a},
Sutskever's method \cite{Sutskever-et-al-PMLR13},
Bengio's method \cite{Bengio-CoRR12},
ADAM \cite{Diederik-ADAM-arxiv14}, NADAM \cite{Dozat-ICLR16},
and so on.
A recent paper \cite{Bara-Bian-SIAM21} establishes the convergence
of the ADAM algorithm using methods very similar to those used here.
However, that is for full updating and not batch updating.
Thus we plan to extend our approach to second-order or momentum-based
methods coupled with batch updating.

We then carry out a few numerical simulations to validate our analysis.
As expected, zeroth-order and first-order methods converge with
batch updating. Also, momentum-based methods with noise-corrupted gradients
and batch updating also converge. However, momentum-based methods that use
approximate gradients and batch updating fail to converge.
This shows that there is room for further theoretical analysis.

\section{Literature Review}\label{sec:Comp}

The literature on optimization is vast, and it is not possible
to review it in its entirety.
Given that fact, our review is focused on the papers that are directly
relevant to the specific class of problems studied here.
When $J(\cdot)$ has a minimum, a simple gradient descent algorithm applied to 
\textit{error-free} measurements of the gradient $\f(\bth) = - \gJ(\bth)$
will converge to $\bths$ under mild conditions 
\cite{Khamaru-Wainwright-icml18}.
Several momentum-based algorithms, which make use of not just the
current gradient and but also past gradients and/or function values,
have been proposed to improve the convergence rates.
Some examples are Polyak's Heavy Ball (HB)
\cite{Polyak-cmmp64}, Nesterov's Accelerated Gradient (NAG) 
\cite{Nesterov-Dokl83, Sutskever-et-al-PMLR13, Bengio-CoRR12}.
These  methods are
widely used in the optimization literature, especially in training 
Deep Neural-Networks.
The paper \cite{Ruder-arxiv16} gives a 
very good survey on variants of gradient descent algorithms for deep learning.

The phrase ``Stochastic Gradient Descent'' (SGD) refers to the case where
when the gradients are noisy.
The noises can arise due to 
(i) choosing partial data (known as \textit{mini-batches}),
(ii) uncertainty while  measuring gradients, and
(iii) approximation errors that arise
when the gradient is approximated using first-order differences as in
\eqref{eq:37}, often referred to as \textit{zeroth-order} methods,
(iv) compression of gradients for saving communication bandwidth in
CPU/GPUs.
With noisy gradients, one can use either full update (every component
is updated at each time), or batch update, which is the focus of this paper.
% Full update with all Noises
% Firstly, to discuss about rather easier to analyze but not always feasible
% is the \textit{full update rule} in SGD. Under no noise it is well known that
% iterations converge to one of the stationary points 
% \cite{Khamaru-Wainwright-icml18}.
\newline
\textbf{Full update under noise:}
% Researchers have studied SGD
% under these conditions quite a lot, as these scenarios occur in problems
% such as
The literature includes
training DNNs with mini-batches of data \cite{Li-et-al-kdd14},
compressed gradients \cite{Lin-et-al-iclr18}, or sometimes intentional noise
injections during training for generalizing the model 
\cite{Noh-et-al-neurips17}.
The Robbins-Siegmund theorem \cite{Robb-Sieg71} 
plays a crucial role in proving the convergence.
The authors of 
\cite{Liu-Yuan-arxiv22, Sebbouh-Gower-Defazio-arxiv20} have proved the 
convergence using the Robbins-Siegmund theorem assuming when function
is Lipschitz-continuous,
% (standard assumption in the literature).
Convergence rates for both convex and non-convex have been analyzed.
Moreover, their analysis was
extended to accelerated algorithms like HB and NAG.
However, the stochastic noise 
is assumed to have zero-mean and bounded variance. 
These assumptions do not hold when approximate gradients as in
\eqref{eq:37} are used.
% these assumptions do not hold when it comes to approximate gradients 
% (explained in later sections).
Many other works such as 
\cite{Khaled-Rich-arxiv20, Bottou-et-al-review18, Mertikopoulos-et-al-arxiv20}
also assume that the errors in gradient measurements have bounded variance.
Relaxing the conditions on the noise has been less studied.
% Generalizing the dynamics of the stochastic noise has been studied
% very rarely.
For example, In \cite{Wang-et-al-neurips21} has
provided converge guarantees for SGD under infinite variance noise, which arose
due to heavy tails.
However, this work has made strong assumptions on $J(\cdot)$
being $\C^2$ and strongly convex, which is not the case with
deep learning problems.
SGD for nonconvex functions has mostly been studied 
when error variance is bounded;
see for example \cite{Crawshaw-et-al-arxiv22, Fehrman-Gess-Jentzen-jmlr20, 
Li-Orabona-pmlr19}.
\newline
\textbf{Batch update under noise:}
% Unlike full update, batch update vector is 
% not anymore in the steepest descent.
Updating only a few randomly selected components at each time
is commonly referred to as Block Coordinate Descent (BCD);
see e.g. \cite{Tseng-jota01}.
Batch updating with error-free measurements 
has been studied widely with convergence results \cite{Nesterov-siamjo12, 
Richtrik-Takac-mp12, Richtrik-Takac-mp15}.
In \cite{Lu-Xiao-arxiv13}, the authors have 
provided a probabilistic convergence result, based
on the Nesterov's framework \cite{Nesterov-siamjo12}.
The study was limited to smooth convex functions and bounded noise variance.
Rather than choosing the blocks to be updated at random,
some researchers have studied methods of selecting the blocks
% deducing a scheme
during the iterations.
% has caught keen interest among researchers. 
Some authors \cite{Spall-jota12, Hernandez-Spall-acc14, Hernandez-Spall-acc16, 
Hernandez-ciss16} have proposed cyclic schemes for batch updates;
convergence has been studied for (pseudo-)convex functions and bounded 
variance of the errors.
Stochastic Approximation (SA) \cite{Robbins-Monro51, 
Wolf-AOMS52, Borkar-book09, Bottou-et-al-review18} is the key framework
in proving convergence.
% (our work is also based on this framework).
Convergence of batch updating in SGD for non-convex functions is not studied
much. 
In \cite{Xu-Yin-siamjo15}, the convergence of BCD is proved for 
(non-)convex functions
under bounded noise in the measurements.
Batch updating has gained lot
of attention in  distributed ML \cite{Niu-et-al-arxiv11}, broadly categorized
into two main algorithms: Synchronous SGD (updates are performed
one after another node) \cite{Chen-et-al-iclr16} and Asynchronous SGD (ASGD) 
(random updates by any node at anytime) \cite{Niu-et-al-arxiv11, 
Xie-et-al-icml20}.
Hogwild! \cite{Niu-et-al-arxiv11} a variant of ASGD has 
been widely used in distributed training.
Convergence proofs for Hogwild!
are studied in \cite{Nguyen-et-al-pmlr18} for both (non-)convex and 
unbounded gradients.
\newline
\textbf{Approximate gradients}:
% On unavailability of gradients of the functions
% (or) costly to compute gradients (zero-order optimization), the easiest way to
% estimate the gradients by using first-order approximation by evaluating
% perturbations. 
\be\label{eq:approx_grad}
\phi_{t+1,i} =
\frac{ J(\bth_t - c_t \eb_i)  -  J(\bth_t + c_t \eb_i)  } {2 c_t}
+ \frac{\xi^-_{t+1,i} - \xi^+_{t+1,i}  } {2 c_t} .
\ee
In batch updating, computing only a few components of the gradient
does not always yield savings in computation.
In contrast, if each component of the gradient is approximated as in
\eqref{eq:approx_grad}, then it is possible to compute these only for the
components to be updated.
However, in this case, the measurements have neither zero mean nor
bounded variance.
If we let $c_t \ap 0$ as $\tai$, the mean tends to $-\gJ(\bth_t)$, but
there is always an error.
Moreover, the variance of the noise term $(\xi_{t+1,i}^- - \xi_{t+1,i}^+)/c_t$
\textit{approaches infinity}.
This problem was studied initially in \cite{Kief-Wolf-AOMS52,Blum54}.
% Earlier
% approach of estimating gradients require $2N$-function evaluations, where
% $N$ being the number of coordinates in the optimisation problem. Sometimes
% to reduce the computation randomized approaches have been introduced,
In \cite{Spall-jhu98}, the gradient is estimated using only $2$-function
evaluations,
% here the
% perturbation vector is random and its distribution follows in such a way
% the probability of a component being zero is zero. To further optimize
which is further reduced to just one function evaluation in
% the computation
\cite{Vakhitov-arc09, Spall-automatica97}.
%  have proposed estimation of gradients using only $1$-function evaluations.
However,
employing these methods in SGD suffers from very slow convergence rate.
Nesterov \& Spokoiny \cite{Nesterov-Spokoiny-fcm15} have deduced upper
bounds on various gradient estimates discussed above.
In addition, they
have proved convergence for \textit{full update} SGD and other momentum-
based methods.
In \cite{Wang-et-al-arxiv17}, convergence is proved when
using approximate gradients under stochastic noises, but
again with \textit{full updates}.
Moreover, their work
was limited to convex functions $\in \C^2$ with bounded hessian.
Other papers such as
\cite{Cai-et-al-siamjo22, Cai-arxiv22} had similar convergence results
under the same assumptions.
Convergence using approximate gradients with 
batch update was proved in \cite{Cai-et-al-arxiv17}, but only for
$\C^2$ convex functions.
The authors of \cite{Wang-et-al-neurips21} have proved convergence 
of SGD under unbounded noises (approximate gradients are a special 
case of this), but only for strong convex functions and with full updates.
\newline
\textbf{Momentum-Based methods}: In \cite{Mohammadi-et-al-acc19}, the 
performance of these algorithms is analyzed for the case where the 
gradients are corrupted by noise. It is shown that the allowed bound 
on the noise depends on the condition number $K$ of the convex function 
and that the error bound for momentum-based methods is larger than that 
for steepest descent by a factor of $\sqrt{K}$. Thus the presence of noise
in the gradient measurements largely negates the advantages of momentum-based
methods. Other approaches such as ADAM \cite{Diederik-ADAM-arxiv14} can 
also, cope with noisy measurements of the gradient. 
In \cite{Barakat-Bianchi-siamjo21}, the convergence of ADAM is proved
for (non-)convex functions under stochastic noise with full update.
Convergence of momentum-based methods under batch updates and unbounded
variance has been studied very rarely.
Numerical experiments in recent works
(including ours) have observed that NAG diverges.
For instance, in 
\cite{Assran-Rabbat20} the authors study the behavior of Nesterov's
method when there are errors in the gradient, and demonstrate that
sometimes the method can diverge.
There are other references as well, but these are an indicative sample.
\newline
\textbf{Similar works}: When approximate
gradients are used together with batch updating, the measurement error
is neither unbiased nor bounded in its variance.
So far as the authors are aware, there is no paper that treats this
situation, especially for nonconvex functions.
%While there is quite a good literature in full/batch
%update with bounded variance. There exist many areas which are studied 
%relatively obscure like batch updates with unbounded variance and its
%applications in approximate gradients for SGD and other momentum-based
%methods. We summarize the works described earlier which are closet to ours
%and mention their limitations:
\ben
\item \cite{Liu-Yuan-arxiv22}, lacks stochastic unbounded variance and
	batch updates.
\item \cite{Nguyen-et-al-pmlr18}, lacks unified analysis and general
	theorem.
\item \cite{Nesterov-Spokoiny-fcm15}, not for batch update.
\item \cite{Wang-et-al-arxiv17, Cai-et-al-siamjo22, Cai-arxiv22}, only for full
	update assumes function belongs to convex and $\C^2$ class.
\item \cite{Cai-et-al-arxiv17}, assumes convexity and $\C^2$.
\item \cite{Xu-Yin-siamjo15}, applications to approximate was not studied.
\een

In addition to the above differences, a general theorem for SGD which works for
various options is not available in the literature. In this paper, we propose
a very general theorem SGD which encapsulates all variants, even momentum-based
methods but an analysis of this option is left for future study.

\section{General Convergence Theorems for Batch Updating}\label{thm:Thm}

In this section, we state
the main convergence theorem for batch updating.
The proof is given in the Appendix.

\subsection{Various Assumptions}\label{thm:21}

In this section, we state and interpret various assumptions about the
objective function and the search direction.
We begin with the objective function.

Suppose $J : \R^d \ap \R$ is a given $\C^1$ function, and it is
desired to find a stationary point of $J(\cdot)$, that is,
a solution of $\gJ(\bth) = \bz$.
It is \textit{not} assumed that the solution is unique.
Our objective is to ensure that the theory presented here embraces
nonconvex functions in addition to convex functions.
In presenting these assumptions, we make use of the following concept 
from \cite[Definition 1]{MV-MCSS-arxiv23}:
A function $\eta : \R_+ \ap \R_+$ is
said to \textbf{belong to Class $\B$} if $\eta(0) = 0$, and in addition
\bd
\inf_{\e \leq r \leq M} \eta(r) > 0 , \fa 0 < \e < M < \infty .
\ed

The following is a list of various assumptions about the objective function.
Note that not every assumption is used in every theorem.
\ben
\item[(J1).] $J(.)$ is $\C^1$ and $\gJ(\cdot)$ is globally
Lipschtiz-continuous with constant $2L$. 
\item[(J2).] $J(.)$ has a global minimum $J^*$, which is attained.
Moreover, $J(\cdot)$ has compact level sets, that is, for each constant
$c \in \R$, the set $\{ \bth : J(\bth) \leq c \}$ is compact.\footnote{Other
equivalent phrases are: $J(\cdot)$ is radially unbounded, or
$J(\cdot)$ is coercive.}
\item[(J3).] Define
\bd
\Jb(\bth) := J(\bth) - J^* .
\ed
Then there exists a constant $C$ such that
% \be\label{eq:210}
\bd
\nmeusq{\gJ(\bth)} \leq C_1 [ J(\bth) - J^* ] , \fa \bth \in \R^d .
\ed
% \ee
\item[(J4).] There is a function $\mu(\cdot)$ belonging to Class $\B$ such that
\bd
\mu(\Jb(\bth)) \leq \nmeu{\gJ(\bth)} , \fa \bth \in \R^d .
\ed
\item[(J5).] There is a function $\nu(\cdot)$ belonging to Class $\B$
such that
\be\label{eq:212}
\r(\bth,S(J)) \leq \nu(\Jb(\bth) )  , \fa \bth \in \R^d .
\ee
where
\bd
\r(\bth,S(J)) := \inf_{\bpsi \in S(J)} \nmeu{\bth - \bpsi } 
\ed
is the distance between $\bth$ and the set $S(J)$.
\een

Now we interpret these assumptions.
In the scalar case $d = 1$, if there is a unique minimum $\bths$,
Assumption (J3) is equivalent to $J(\th) \leq (C_1/2) \th^2$.
In higher dimensions, the assumption requires $J(\bth)$ to grow no
faster than quadratically in $\r(\bth,S(J))$.
Assumptions (J4) and (J5) permit $J(\cdot)$ to have multiple stationary points;
thus $S(J)$ need not be a singleton set.
However, both imply that $J(\bth) = J^*$ for all $\bth \in S(J)$;
thus every local minimum must also be a global minimum.
Assumption (J4) implies that, for any sequence $\{ \bth_t \}$,
if $\Jb(\bth_t)$ is bounded away from zero, then so is $\nmeu{\gJ(\bth_t)}$,
while Assumption (J5) implies that if $\Jb(\bth_t) \ap 0$, then
$\r(\bth_t,S(J)) \ap 0$ as $\tai$.

\begin{example}\label{exam:21}
Consider the following function $J : \R \ap \R$:
\bd
J(\th) := \left\{ \ba{ll} \sin[(\pi/2)(\th-1)] + 1 , & -5 \leq \th \leq 5 , \\
0.5 + \sqrt{(\pi/2)(\th-5) + 0.25}, & 5 \leq \th , \\
J(-\th), & \th \leq -5 , \ea \right.
\ed
which function is depicted in Figure \ref{fig:exam-1}
It satisfies Assumptions (J4) and (J5), and has multiple global minima.
However, if some of the local minima were to be greater than the global
minimum, then (J4) and (J5) would fail to be satisfied.
\bfig[htb]
\bc
\includegraphics[width=90mm]{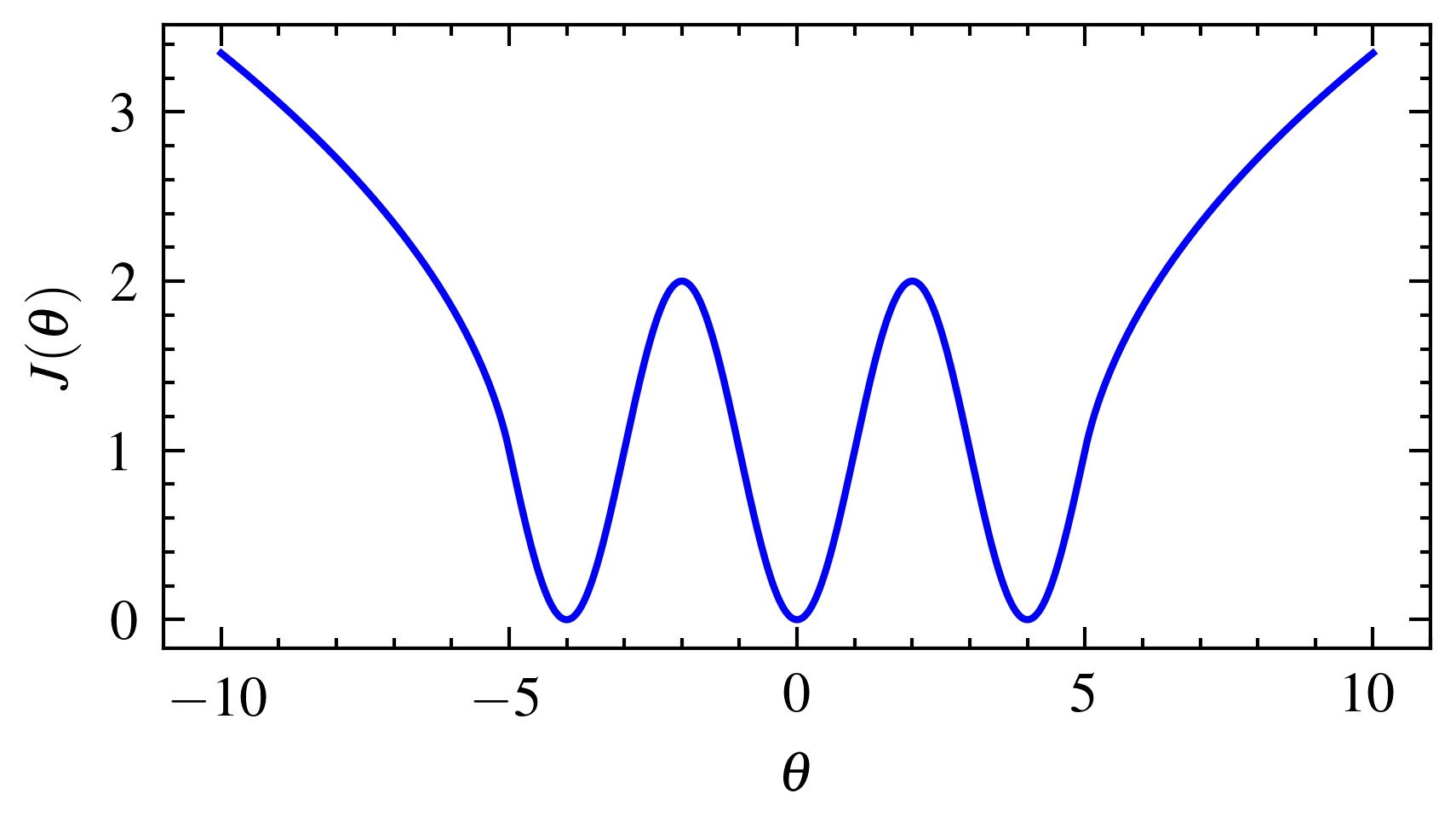}
\ec
\caption{Depiction of a function with multiple minima satisfying
Assumptions (J4) and (J5)}
\label{fig:exam-1}
\efig
\end{example}

\begin{example}\label{exam:22}
Now suppose $J: \R^d \ap \R^d$ is $\C^2$ is strictly convex,
and suppose further that there exist constants $0 < \clb \leq \cub < \infty$
such that
\bd
\clb I_d \leq \nabla^2 J(\bth) \leq \cub I_d , \fa \bth \in \R^d ,
\ed
Here, for symmetric matrices $A, B$, the notation
$A \leq B$ means that $B-A$ is positive semidefinite.
Since $J(\cdot)$ is strictly convex, $S(J)$ is a singleton $\{ \bths\}$.
In this case Assumptions (J4) and (J5) are satisfied trivially.
Thus the theory presented here is applicable to strictly convex functions,
as well as \textit{some} nonconvex functions.
\end{example}

The algorithm for finding a stationary point of $J(\cdot)$ is as follows:
Choose an initial guess $\bth_0 \in \R^d$ (usually deterministic).
At time $t+1 \geq 1$, we choose a \textit{random}
``search direction'' $\bphi_{t+1}$, which satisfies certain
assumptions to be specified below.
Once $\bphi_{t+1}$ is chosen, $\bth_t$ is updated according to
\be\label{eq:22}
\bth_{t+1} = \bth_t + \al_t \bphi_{t+1} .
\ee
Here, $\{ \al_t \}$ is a \textit{deterministic} sequence of step sizes
where $\al_t \in (0,1)$ for each $t$.
% $\{ \al_t \}$, and the updating formula is
In batch updating we could, in principle, use ``local clocks'' to choose
a different step size for each updated component, as suggested in
\cite{Borkar98}.
However, in the interests of simplicity,
in this paper we stick to using a \textit{global clock},
so that every component that is updated uses the same step size.
Batch updating means that, at any time $t$, only some components of
$\bphi_{t+1}$ could be nonzero, and the rest are set to zero.
Some possible options for selecting $\bphi_{t+1}$ are discussed in
Section \ref{sec:Var}.

Let $\bth_0^t$ denote the tuple $\bth_0 , \cdots , \bth_t$,
and define $\bphi_1^t$ analogously.
(Note that there is no $\bphi_0$.)
Let $\{ \F_t \}$ be a filtration (that is, an increasing sequence of
$\s$-algebras) such that $\bth_0^t,\bphi_1^t$, and the random processes
up to time $t$ that are involved in selecting the nonzero components of
$\bphi_{t+1}$, are all measurable with respect to $\F_t$.
For future use, let $\M(\F_t)$ denote the set of all functions
that are measurable with respect to $\F_t$.

The general framework studied in \cite{Pol-Tsy73} (which is our inspiration)
is that the search direction $\bphi_t$ is
random, but there is a fixed constant $\d$ such that
\bd
E( \IP{ \gJ(\bth_t)} {\bphi_{t+1} } | \F_t ) \leq - \d \nmeusq{\gJ(\bth_t)} ,
\ed
where $E(\cdot | \F_t )$ denote the conditional expectation with
respect to $\F_t$.
% Since $\gJ(\bth_t) \in \M(\F_t)$ and taking the conditional expectation
% is a linear operation, the above is equivalent to
% \bd
% \IP{ \gJ(\bth_t)} {E( \bphi_{t+1} | \F_t ) } \leq - \d \nmeusq{\gJ(\bth_t)} .
% \ed
Since the constant $\d$ can be absorbed into the step size $\al_t$,
we choose $\d = 1$, and accordingly define:
% {\color{blue}{Can we rewrite the terms in this order ($\bzeta_{t+1}$, 
% decomposition of $\bphi_{t}$, $\bphit_t$) for better understanding?}}
\be\label{eq:24}
\bphit_t := E( \bphi_{t+1} | \F_t ) + \gJ(\bth_t) .
\ee
Thus $\bphit_t$ is the difference between the conditional expectation
of the search direction, and the desired value $\gJ(\bth_t)$.
Observe that $\bphit_t \in \M(\F_t)$.
Next, define $\bzeta_{t+1}$ as the ``unpredictable part''
of the search direction, namely
\be\label{eq:25}
\bzeta_{t+1} := \bphi_{t+1} - E( \bphi_{t+1} | \F_t ) .
% = \bphi_{t+1} - \bphit_t + \gJ(\bth_t) .
\ee

With this background, we make the following assumptions about the search
direction $\bphi_{t+1}$:
\ben
\item[(D1).] There is a sequence of deterministic constants $\{ b_t \}$ such
that 
\be\label{eq:27}
\nmeusq{ \bphit_t} \leq b_t^2 , \fa t .
\ee
\item[(D2).] There is a sequence of deterministic constants $\{ \s_t \}$ such
that
\be\label{eq:28}
E( \nmeusq{\bzeta_{t+1} } | \F_t ) \leq \s_t^2 (1 + \nmeusq{\gJ(\bth_t) } ) ,
\fa t .
\ee
% Note that the sequence $\{ \s_t \}$ \textit{need not be bounded}.
% {\color{blue}{Does sequence $\{b_t\}$ also need not be bounded?}}
\een

\subsection{Theorem Statement}\label{ssec:23}

With the above assumptions, we now state the convergence theorem.

\begin{theorem}\label{thm:1}
Define $\{ \bth_t \}$ via \eqref{eq:22}.
Then we can state the following conclusions.
\ben
\item
Suppose assumptions (J1) -- (J3) and (D1) -- (D2) are satisfied.
Suppose further that
\be\label{eq:29}
\sum_{t=0}^\infty \al_t^2 < \infty ,
\sum_{t=0}^\infty \al_t b_t < \infty ,
\sum_{t=0}^\infty \al_t^2\s_t^2 < \infty .
\ee
Then $\{ J(\bth_t) \}$, $\{ \gJ(\bth_t) \}$,
and $\{ \bth_t \}$ are  bounded almost surely.
\item Suppose that in addition to the assumptions in Item 1, we add
\be\label{eq:210}
\sum_{t=0}^\infty \al_t = \infty .
\ee
Then
\be\label{eq:210a}
\liminf_{\tai} \nmeu{\gJ(\bth_t)} = 0 \as 
\ee
\item Suppose that in addition to the assumptions in Item 2, we add Assumption
(J4).
Then $J(\bth_t) \ap J^*$ almost surely, and
$\gJ(\bth_t) \ap \bz$ almost surely, as $\tai$.
\item Suppose that in addition to the assumptions in Item 3, we add Assumption
(J5).
Then $\r(\bth_t,S(J)) \ap 0$ almost surely as $\tai$.
\een
\end{theorem}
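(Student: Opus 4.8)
The plan is to treat the recursion \eqref{eq:22} as a stochastic approximation scheme and to push the entire argument through the Robbins--Siegmund almost-supermartingale convergence theorem, using the Lyapunov function $V_t := J(\bth_t) - J^* = \Jb(\bth_t) \ge 0$. Everything reduces to producing a single one-step inequality of the form $E(V_{t+1}\mid\F_t) \le (1+A_t)V_t - B_t + C_t$, where $\{A_t\}$ and $\{C_t\}$ are deterministic and summable and $B_t \ge 0$. Robbins--Siegmund then simultaneously delivers the a.s.\ convergence of $V_t$ to a finite limit $V_\infty$ and the summability $\sum_t B_t < \infty$ a.s., and all four conclusions are read off from these two facts.

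First I would derive that inequality. Since $\gJ$ is globally Lipschitz with constant $2L$ by (J1), the descent lemma gives $J(\bth_{t+1}) \le J(\bth_t) + \al_t \IP{\gJ(\bth_t)}{\bphi_{t+1}} + L\al_t^2 \nmeusq{\bphi_{t+1}}$. Taking $E(\cdot\mid\F_t)$ and substituting $E(\bphi_{t+1}\mid\F_t) = \bphit_t - \gJ(\bth_t)$ from \eqref{eq:24}, the inner-product term produces $-\al_t\nmeusq{\gJ(\bth_t)} + \al_t\IP{\gJ(\bth_t)}{\bphit_t}$, and I bound the cross term by Cauchy--Schwarz together with (D1) as $\al_t b_t \nmeu{\gJ(\bth_t)} \le \tfrac{\al_t b_t}{2}(1 + \nmeusq{\gJ(\bth_t)})$. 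For the quadratic term I use the orthogonal decomposition $\bphi_{t+1} = (\bphit_t - \gJ(\bth_t)) + \bzeta_{t+1}$, whose cross term vanishes under $E(\cdot\mid\F_t)$ because $E(\bzeta_{t+1}\mid\F_t)=\bz$, and then (D1) and (D2) give $E(\nmeusq{\bphi_{t+1}}\mid\F_t) \le 2b_t^2 + \s_t^2 + (2+\s_t^2)\nmeusq{\gJ(\bth_t)}$. The decisive manoeuvre is to retain the genuinely negative term $-\al_t\nmeusq{\gJ(\bth_t)}$ as $B_t$, while bounding every remaining \emph{positive} occurrence of $\nmeusq{\gJ(\bth_t)}$ by $C_1 V_t$ via (J3). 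Collecting terms yields $A_t = C_1\big(\tfrac{\al_t b_t}{2} + L\al_t^2(2+\s_t^2)\big)$, $B_t = \al_t\nmeusq{\gJ(\bth_t)}$, and $C_t = \tfrac{\al_t b_t}{2} + L\al_t^2(2b_t^2+\s_t^2)$. Summability of $A_t$ and $C_t$ is exactly what \eqref{eq:29} supplies; the one non-immediate piece, $\sum_t \al_t^2 b_t^2 < \infty$, I obtain from $\sum_t \al_t b_t < \infty$, which forces $\al_t b_t \to 0$ and hence $\al_t^2 b_t^2 \le \al_t b_t$ eventually.

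With Robbins--Siegmund in hand, $V_t \to V_\infty < \infty$ a.s.\ and $\sum_t \al_t\nmeusq{\gJ(\bth_t)} < \infty$ a.s. Conclusion 1 is then immediate: $\{J(\bth_t)\}$ is bounded because $V_t$ converges, $\{\gJ(\bth_t)\}$ is bounded by (J3), and $\{\bth_t\}$ is bounded because it remains in a level set $\{J \le c\}$, which is compact by (J2). For Conclusion 2 I argue by contradiction: if $\liminf_t \nmeu{\gJ(\bth_t)} > 0$ on an event of positive probability, then $\nmeusq{\gJ(\bth_t)}$ is bounded below by a positive constant for large $t$, and combined with $\sum_t\al_t = \infty$ from \eqref{eq:210} this forces $\sum_t \al_t\nmeusq{\gJ(\bth_t)} = \infty$, contradicting the summability just obtained; this gives \eqref{eq:210a}.

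For Conclusion 3 I exploit that $V_t$ already converges to $V_\infty$. By (J4), $\mu(V_t) \le \nmeu{\gJ(\bth_t)}$, so $\liminf_t \mu(V_t) = 0$; were $V_\infty > 0$, then $V_t$ would eventually lie in a compact interval $[V_\infty/2, 2V_\infty] \subset (0,\infty)$ on which the Class $\B$ property forces $\inf \mu > 0$, a contradiction. Hence $V_\infty = 0$, i.e.\ $J(\bth_t) \to J^*$, and then $\gJ(\bth_t) \to \bz$ by (J3). For Conclusion 4, although (J5) gives $\r(\bth_t, S(J)) \le \nu(V_t)$, I only know $V_t \to 0$, and Class $\B$ bounds $\nu$ from below rather than near the origin, so this bound alone does not yield $\nu(V_t)\to 0$; I would instead argue through limit points, using that $\{\bth_t\}$ is bounded, that every convergent subsequence has a limit $\bth^\star$ with $\gJ(\bth^\star)=\bz$ (hence $\bth^\star \in S(J)$) by continuity of $\gJ$, and that continuity of the distance $\r(\cdot,S(J))$ then forces $\r(\bth_t,S(J)) \to 0$. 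I expect the main obstacle to be the second paragraph: the search direction is simultaneously \emph{biased} (the non-vanishing $b_t$) and of \emph{unbounded conditional variance} (the $\s_t^2\nmeusq{\gJ(\bth_t)}$ growth in (D2)), so the delicate point is to route the positive $\nmeusq{\gJ(\bth_t)}$ contributions through (J3) into the coefficient of $V_t$ while keeping the negative descent term, and to confirm that $A_t,C_t$ are summable under precisely the hypotheses \eqref{eq:29}; the Class $\B$ mismatch in Conclusion 4 is the secondary place requiring care.
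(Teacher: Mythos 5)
Your proposal is correct, and for Items 1--3 it is essentially the paper's own argument: the same descent-lemma expansion of $J(\bth_{t+1})$, the same decomposition of $\bphi_{t+1}$ into its conditional mean $\bphit_t - \gJ(\bth_t)$ and the martingale-difference part $\bzeta_{t+1}$, the same routing of every positive occurrence of $\nmeusq{\gJ(\bth_t)}$ through (J3) into the coefficient of $\Jb(\bth_t)$ so that the genuinely negative term $-\al_t\nmeusq{\gJ(\bth_t)}$ survives as the $\psi_t$ of Robbins--Siegmund, the same summability bookkeeping (including the $\al_t^2 b_t^2 \leq \al_t b_t$ eventually trick), and the same $\sum_t \al_t = \infty$ contradiction for Item 2. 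For Item 3 you use the already-established convergence of $V_t$ plus the Class $\B$ lower bound, where the paper reruns the summability contradiction; this is a cosmetic difference. Where you genuinely depart is Item 4. The paper disposes of it in one line (``now apply Assumption (J5)''), which tacitly reads (J5) as saying that $\Jb(\bth_t) \ap 0$ forces $\nu(\Jb(\bth_t)) \ap 0$; as you correctly note, membership of $\nu$ in Class $\B$ controls $\nu$ only on intervals bounded away from the origin and does not make $\nu$ continuous at $0$, so that one-liner implicitly needs an additional property of $\nu$ (continuity at $0$, or a class-$\mathcal{K}$-type upper bound) that the quoted definition does not supply. Your replacement --- boundedness of $\{\bth_t\}$ from Item 1, $\gJ(\bth_t) \ap \bz$ from Item 3, continuity of $\gJ$, hence every limit point of $\{\bth_t\}$ is a stationary point and so lies in $S(J)$, hence $\r(\bth_t,S(J)) \ap 0$ by a subsequence/compactness argument on each sample path in the full-measure event --- is sound, and has the incidental benefit of not invoking (J5) at all (it does use that $S(J)$ denotes the set of stationary points, which is how the paper uses that symbol). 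So for Item 4 your route is both different from and more watertight than the paper's.
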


\subsection{Proof of Main Convergence Theorem}\label{ssec:23}

The proof of Theorem \ref {thm:1}
is based on the ``almost supermartingale lemma'' of Robbins \&
Siegmund Theorem \cite{Robb-Sieg71}.
That paper is rather difficult to locate.
However, the same theorem is stated as Lemma 2 in \cite
[Section 5.2]{BMP92}.
A recent survey of many results along similar lines is found in
\cite{Fran-Gram22}, where Lemma \ref{lemma:1} below is stated as Lemma 4.1.
The result states the following:

\begin{lemma}\label{lemma:1}
Suppose $\{ z_t \} , \{ \d_t \} , \{ \g_t \} , \{ \psi_t \}$ are
stochastic processes defined on some probability space $(\OM,\SI,P)$,
taking values in $[0,\infty)$, adapted to some
filtration $\{ \F_t \}$, satisfying
\be\label{eq:27}
E(z_{t+1} | \F_t ) \leq (1 + \d_t) z_t + \g_t - \psi_t \as, \fa t .
\ee
Define
\be\label{eq:27a}
\OM_0 := \{ \om \in \OM : \sum_{t=0}^\infty \d_t(\om) < \infty \}
\cap \{ \om : \sum_{t=0}^\infty \g_t(\om) < \infty \} .
\ee
Then for all $\om \in \OM_0$, $\lim_{\tai} z_t(\om)$ exists, and in addition,
\be\label{eq:27b}
\sum_{t=0}^\infty \psi_t(\om) < \infty , \fa \om \in \OM_0 .
\ee
In particular, if $P(\OM_0) = 1$, then $\{ z_t \}$ is bounded almost surely.
\end{lemma}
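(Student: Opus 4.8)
The plan is to turn the almost‑supermartingale inequality \eqref{eq:27} into a genuine supermartingale by dividing out the multiplicative factors $(1+\d_t)$ and compensating with the accumulated drift, and then to use a stopping‑time localization so that the resulting process stays integrable and bounded below on the event of interest. The subtlety the construction must respect is that we want pathwise convergence on all of $\OM_0$, not merely almost‑sure convergence under $P(\OM_0)=1$; off $\OM_0$ the compensator need not even be integrable, which is precisely why a truncation is required.

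First I would set $\beta_t := \prod_{s=0}^{t-1}(1+\d_s)^{-1}$, with $\beta_0 = 1$. Since each $\d_s \geq 0$ is $\F_s$‑measurable, $\beta_t$ is $\F_{t-1}$‑measurable, nonincreasing, and satisfies $0 < \beta_t \leq 1$; in particular $\beta_{t+1}$ is $\F_t$‑measurable and $\beta_{t+1}(1+\d_t) = \beta_t$. Writing $u_t := \beta_t z_t$, multiplying \eqref{eq:27} by $\beta_{t+1}$ and pulling it inside the conditional expectation gives
\[
E(u_{t+1} \mid \F_t) \leq u_t + \beta_{t+1}\g_t - \beta_{t+1}\psi_t \as
\]
I would then introduce the compensated process
\[
M_t := u_t - \sum_{s=0}^{t-1}\beta_{s+1}\g_s + \sum_{s=0}^{t-1}\beta_{s+1}\psi_s ,
\]
all of whose summands are $\F_t$‑measurable and integrable (integrability of $\psi_t$ is free, since $0 \leq \psi_t \leq (1+\d_t)z_t + \g_t$). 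A one‑line computation using the displayed bound shows $E(M_{t+1}\mid\F_t)\leq M_t$, so $\{M_t\}$ is a supermartingale — but one whose lower bound is controlled only through $\sum \beta_{s+1}\g_s$.

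Next comes the localization. For each $c\in\Nbb$ put $\tau_c := \inf\{t\geq 0 : \sum_{s=0}^{t}\g_s > c\}$, a stopping time for $\{\F_t\}$. Because $\beta_{s+1}\leq 1$ and the partial $\g$‑sum stays $\leq c$ up to and including the stopped index, the stopped process $M_{t\wedge\tau_c}$ is a supermartingale bounded below by $-c$; by Doob's convergence theorem it converges almost surely as $\tai$. Since $\{\tau_c=\infty\}=\{\sum_s\g_s\leq c\}$ and on this event the stopped and unstopped processes coincide, $M_t$ converges a.s. on $\{\sum_s\g_s\leq c\}$, and taking the union over $c$ yields convergence of $M_t$ on $\{\sum_s\g_s<\infty\}\supseteq\OM_0$. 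Restricting to $\OM_0$, where additionally $\sum_s\d_s<\infty$, the product $\beta_t$ converges to some $\beta_\infty\in(0,1]$ and $\sum_s\beta_{s+1}\g_s\leq\sum_s\g_s<\infty$; hence $u_t + \sum_{s<t}\beta_{s+1}\psi_s$ converges. As the $\psi$‑sum is nondecreasing and $u_t\geq 0$, both $\sum_s\beta_{s+1}\psi_s<\infty$ and $\lim_t u_t$ exist. Dividing by $\beta_t\to\beta_\infty>0$ gives existence of $\lim_t z_t$ on $\OM_0$, while $\beta_{s+1}\geq\beta_\infty>0$ upgrades $\sum_s\beta_{s+1}\psi_s<\infty$ to $\sum_s\psi_s<\infty$, which is \eqref{eq:27b}; the final assertion under $P(\OM_0)=1$ is then immediate.

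The main obstacle is exactly the pathwise localization: without truncating the partial sums of $\g_s$, the unstopped $M_t$ can fail to be a usable (integrable, bounded‑below) supermartingale on the complement of $\OM_0$, so I cannot simply invoke supermartingale convergence globally. The stopping‑time cutoff at level $c$ restores both integrability and the uniform lower bound $-c$, and the passage $c\to\infty$ is what recovers every sample path in $\OM_0$. The remaining care is purely bookkeeping — verifying $\beta_{t+1}(1+\d_t)=\beta_t$, the $\F_t$‑measurability of each summand of $M_t$, and the elementary fact that a convergent sum of a nonnegative sequence and a nondecreasing sequence forces both to converge — which is where index or sign slips would most easily occur.
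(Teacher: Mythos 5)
The paper does not actually prove Lemma~\ref{lemma:1}: it is quoted as the Robbins--Siegmund almost-supermartingale theorem, with the reader referred to \cite{Robb-Sieg71}, \cite{BMP92} and \cite{Fran-Gram22} for the proof. Your argument is correct and is essentially the classical proof given in those sources: normalize by $\beta_t=\prod_{s<t}(1+\d_s)^{-1}$, compensate with the partial sums of $\beta_{s+1}\g_s$ and $\beta_{s+1}\psi_s$ to obtain a supermartingale, localize with the stopping times $\tau_c$ so that the stopped process is bounded below by $-c$, invoke supermartingale convergence, and let $c\to\infty$; the bookkeeping ($\beta_{t+1}(1+\d_t)=\beta_t$, measurability of the summands, and the splitting of a convergent sum of a nonnegative term and a nondecreasing term) all checks out. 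The only point worth flagging is integrability: as stated the lemma does not assume $E(z_t)<\infty$, so for $M_t$ to be a genuine (integrable) supermartingale one either adds this standing hypothesis, as in \cite{Robb-Sieg71}, or folds $z_t$ itself into the localization --- a routine patch that does not affect the substance of your argument.
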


We also make use of the following result which is easy to prove, and
is stated in this form in \cite[Eq.\ (2.40)]{Ber-Tsi-SIAM00}.

\begin{lemma}\label{lemma:2}
Suppose $J(.)$ is $\C^1$ and $\gJ(.)$ is globally Lipschtiz-continuos with
constant $2L$.
Then
\be\label{eq:216}
J(\y_t) \leq J(\x_t) + \IP{\gJ(\x_t)}{\y_t - \x_t}
+ L \nmeusq{\y_t - \x_t}
\ee
\end{lemma}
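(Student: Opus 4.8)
The plan is to reduce the multivariate inequality to a one-dimensional computation along the line segment joining $\x_t$ and $\y_t$, using the fundamental theorem of calculus and then the Lipschitz bound on $\gJ(\cdot)$. This is the standard ``descent lemma'' argument, and the inequality should follow directly once the segment is parametrized.

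First I would introduce the scalar auxiliary function $g : [0,1] \ap \R$ defined by $g(s) := J(\x_t + s(\y_t - \x_t))$, so that $g(0) = J(\x_t)$ and $g(1) = J(\y_t)$. Since $J(\cdot)$ is $\C^1$, the chain rule gives $g'(s) = \IP{\gJ(\x_t + s(\y_t - \x_t))}{\y_t - \x_t}$, and the fundamental theorem of calculus yields
\[
J(\y_t) - J(\x_t) = \int_0^1 \IP{\gJ(\x_t + s(\y_t - \x_t))}{\y_t - \x_t} \, ds .
\]
I would then add and subtract $\gJ(\x_t)$ inside the inner product. The contribution from $\gJ(\x_t)$ is constant in $s$ and integrates to exactly $\IP{\gJ(\x_t)}{\y_t - \x_t}$, which is the first-order term in the claim. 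For the remaining integrand I would apply the Cauchy--Schwarz inequality followed by the hypothesis that $\gJ(\cdot)$ is Lipschitz with constant $2L$, giving $\nmeu{\gJ(\x_t + s(\y_t - \x_t)) - \gJ(\x_t)} \leq 2Ls\,\nmeu{\y_t - \x_t}$. Hence the remainder is bounded by $\int_0^1 2Ls\,\nmeusq{\y_t - \x_t}\,ds = L\,\nmeusq{\y_t - \x_t}$, the quadratic term, and combining the two pieces gives \eqref{eq:216}.

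There is no genuine obstacle here; the argument is entirely routine. The only point deserving attention is the bookkeeping of the Lipschitz constant: because the hypothesis uses constant $2L$ rather than $L$, the factor of $2$ cancels against the $\tfrac{1}{2}$ produced by $\int_0^1 s\,ds$, leaving the coefficient $L$ (rather than $L/2$) on the quadratic term, exactly as stated. Measurability and integrability of $s \mapsto g'(s)$ are immediate from the continuity of $\gJ(\cdot)$ on the compact segment, so no additional regularity hypotheses are required.
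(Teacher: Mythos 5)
Your proof is correct and is exactly the standard argument the paper relies on: the paper does not write out a proof of Lemma \ref{lemma:2}, citing \cite{Ber-Tsi-SIAM00} instead, but the same segment-parametrization/fundamental-theorem-of-calculus computation is precisely what the authors later adapt in their proof of Lemma \ref{lemma:3}. Your bookkeeping of the Lipschitz constant $2L$ cancelling the $\tfrac{1}{2}$ from $\int_0^1 s\,ds$ is also right.
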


We begin with a few preliminary observations.
By the ``tower'' property of conditional expectations,
it follows from \eqref{eq:25} that
\be\label{eq:26}
E( \bzeta_{t+1} | \F_t ) = \bz .
\ee
Therefore
\beq
E( \nmeusq{\bphi_{t+1}} | \F_t ) & = & \nmeusq{ \bphit_t - \gJ(\bth_t) }
+ E( \nmeusq{\bzeta_{t+1}} | \F_t ) \nonumber \\
& = & \nmeusq{\bphit_t} - 2 \IP{\bphit_t}{\gJ(\bth_t)} \nonumber \\ 
& + & \nmeusq{\gJ(\bth_t)} + E( \nmeusq{\bzeta_{t+1}} | \F_t ) \label{eq:23}
\eeq
\newline
\textbf{(Proof of Theorem \ref{thm:1}:)}
\newline
{\textbf{Item 1:}
Recall the update rule \eqref{eq:22} and apply Lemma \ref{lemma:1}.
This gives
\begin{eqnarray*}
J(\bth_{t+1}) & = & J( \bth_t + \al_t \bphi_{t+1} ) \\
& \leq & J(\bth_t) + \al_t \IP{\gJ(\bth_t)}{\bphi_{t+1}}
+ \al_t^2L \nmeu{\bphi_{t+1}}^2.
\end{eqnarray*}
Subtract $J^*$ from both sides and define $\Jb(\bth) := J(\bth) - J^*$.
This gives
\bd
\Jb(\bth_{t+1}) \leq \Jb(\bth_t) + \al_t \IP{\gJ(\bth_t)}{\bphi_{t+1}}
+ \al_t^2 L \nmeusq{\bphi_{t+1}} .
\ed
Now take conditional expectations and invoke \eqref{eq:24}, \eqref{eq:26}
and \eqref{eq:23}.
This gives
\begin{eqnarray*}
E( \Jb(\bth_{t+1}) | \F_t ) & \leq & \Jb(\bth_t)
+ \al_t \IP{\gJ(\bth_t)}{ \bphit_t} - \al_t \nmeusq{\gJ(\bth_t)} \\
& + & \al_t^2 L [ \nmeusq{ \bphit_t - \gJ(\bth_t)} 
+ E( \nmeusq{\bzeta_{t+1}} | \F_t) ] .
\end{eqnarray*}
Now apply Schwarz's inquality, and assumptions (D1), (D2).
This gives
\begin{eqnarray*}
E( \Jb(\bth_{t+1}) | \F_t ) & \leq & \Jb(\bth_t) + \al_t b_t \nmeu{\gJ(\bth_t)}
- \al_t \nmeusq{\gJ(\bth_t)} \nonumber \\
& + & \al_t^2 L [ b_t^2 + 2 b_t \nmeu{\gJ(\bth_t)} 
+ \nmeusq{\gJ(\bth_t)} + \s_t^2 ( 1 + \nmeusq{\gJ(\bth_t)} ) ] .
\end{eqnarray*}
Next, invoke (J3), and observe that
\bd
\nmeu{\gJ(\bth_t)} \leq 2 \nmeu{\gJ(\bth_t)} \leq 1 + \nmeusq{\gJ(\bth_t)} 
\leq 1 + C_1 \Jb(\bth) .
\ed
This leads to
\begin{eqnarray*}
E( \Jb(\bth_{t+1}) | \F_t ) & \leq & \Jb(\bth_t)
+ \al_t b_t (1 + C_1 \Jb(\bth_t)) - \al_t \nmeusq{\gJ(\bth_t)} \nonumber \\
& + & \al_t^2 L [ b_t^2 + 
b_t (1 + C_1 \Jb(\bth_t)) + C_1 \Jb(\bth_t) + \s_t^2 ( 1 + C_1 \Jb(\bth_t)) ] .
\end{eqnarray*}
After regrouping terms, the above bound is of the form \eqref{eq:27} with
\bd
z_t = \Jb(\bth_t) ,
\d_t = \al_t b_t C_1 + \al_t^2 L C_1 ( b_t + 1 + \s_t^2 ) ,
\ed
\bd
\g_t = \al_t b_t + \al_t^2 L ( b_t^2 + b_t + \s_t^2 ) ,
\psi_t = \al_t \nmeusq{\gJ(\bth_t)} .
\ed
For Item 1, let us ignore $\psi_t$ because it is non-negative.
Also, both $\{ \d_t \}$ and $\{ \g_t \}$ are deterministic sequences.
Hence, if it can be shown that both sequences $\{ \d_t \}$ and $\{ \g_t \}$
are summable, then it would follow from Lemma \ref{lemma:1}
that $\Jb(\bth_t)$ converges almost surely to some random
variable $\eta$, and is bounded along almost all sample paths.
Now (J3) implies that $\gJ(\bth_t)$ is also bounded almost surely,
while the compactness of the level sets of $J(\cdot)$ implies that
$\{ \bth_t \}$ is bounded almost surely.
Thus we focus on establishing the summability of these two sequences
$\{ \d_t \}$ and $\{ \g_t \}$.

For this purpose, we recall the hypotheses in \eqref{eq:29}.
Using these, it is shown that each term in the definitions of $\d_t$
and $\g_t$ is summable.
First,
$\sum_{t=0}^\infty \al_t b_t < \infty$, which is the first term (modulo
the constant $C_1$) in both $\d_t$ and $\g_t$.
Next,
\bd
\sum_{t=0}^\infty \al_t b_t < \infty \imp
\sum_{t=0}^\infty \al_t^2 b_t^2 < \infty ,
\ed
which is the second term in $\g_t$.
Next, because $\al_t$ is bounded by one, we have
\bd
\sum_{t=0}^\infty \al_t b_t < \infty \imp
\sum_{t=0}^\infty \al_t^2 b_t < \infty .
\ed
which is the second term in $\d_t$ and the third term in $\g_t$.
Next, by assumption $\sum_{t=0}^\infty \al_t^2 < \infty$, which is the
third term in $\d_t$.
Last, by assumption $\sum_{t=0}^\infty \al_t^2 \s_t^2 < \infty$,
which is the fourth and final term (modulo $C_1$) in both $\d_t$ and $\g_t$.
Hence both sequences are summable, which establishes Item 1.
\newline
\textbf{Item 2:}
Now we make use of the second part of Lemma \ref{lemma:1}, namely
\bd
\sum_{t=0}^\infty \psi_t = 
\sum_{t=0}^\infty \al_t \nmeusq{\gJ(\bth_t)}  < \infty \as
\ed
Suppose there exists an $\om \in \OM$ such that
\bd
\liminf_{\tai} \nmeusq{\gJ(\bth_t)(\om)} > 0 , \mbox{ say } 2 \e .
\ed
Choose a time $T$ such that
\bd
\nmeusq{\gJ(\bth_t)(\om)} \geq \e , \fa t \geq T ,
\ed
and observe that
\bd
\sum_{t=T}^\infty \al_t = \infty .
\ed
Then
\be\label{eq:210b} 
\sum_{t=T}^\infty \al_t \nmeusq{\gJ(\bth_t)(\om)}  \geq 
\e \sum_{t=T}^\infty \al_t = \infty ,
\ee
which shows that the set of such $\om \in \OM$ must have measure zero.
This is Item 2.
\newline
\textbf{Item 3:}
Now it is assumed that Assumption (J4) also holds in addition to (J3).
Define
$\OM_0$ to be the set of $\om \in \OM$ such that
$\lim_{t \ap \infty} J(\bth_t) = \eta(\om) < \infty$, and observe that $P(\OM_0) = 1$.
Now suppose $\om \in \OM_0$, and define
$M = M(\om) = \sup_t J(\bth_t) < \infty$.
Suppose by way of contradiction that $\eta(\om) > 0$, say 
$\eta(\om) > 2 \e$.
Choose $T$ sufficiently large that $J(\bth_t) \geq \e$ for all $t \geq T$.
Then by (J4), it follows that, for all $t \geq T$, we have
\bd
\nmeu{\gJ(\bth_t)} \geq \min_{\e \leq r \leq M} \mu(r) =: c > 0.
\ed
Now apply \eqref{eq:210b} with $\e$ replaced by $c$,
which leads to a contradiction.
Hence the set of $\om \in \OM_0$ for which $\eta(\om) > 0$
has measure zero, or $J(\bth_t) \ap J^*$ almost surely, and
$\gJ(\bth_t) \ap \bz$ almost surely.
\newline
\textbf{Item 4:}
We already know from Item 3 that $J(\bth_t) \ap J^*$ almost surely.
Now apply Assumption (J5).
% \halmos

\section{Various Batch Updating Options}\label{sec:Var}

In this section, we first propose some possible choices of search directions
$\bphi_{t+1}$ for batch updating. 
These include many commonly used choices.
Then we analyze each of these choices, and
show that they all satisfy the standard assumptions (D1) and (D2).
Usually (D1) is straight-forward and the main challenge is to establish (D2).
Once it is established that both (D1) and (D2) are satisfied, then one
can apply Theorem \ref{thm:1} to conclude that the iterative algorithm
\eqref{eq:22} converges to a solution for any objective function $J(\cdot)$
that satisfies Assumptions (J1) through (J5), or some subset thereof,
provided the step size conditions \eqref{eq:29} and \eqref{eq:210} are
satisfied.

\subsection{Various Options for Batch Updating}\label{ssec:31}
In this subsection we study different options that are possible to
fit in our convergence theorem.

\textbf{Option 1:}
The first of these is the standard steepest descent with possibly noise-corrupted
gradient measurements.
Let
\be\label{eq:31}
\bphi_{t+1} = - \gJ(\bth_t) + \bxi_{t+1} ,
\ee
where $\{ \bxi_t \}_{t \geq 1}$ is a sequence of unstructured disturbances,
including measurement errors.
Following the usage in this area, we will refer to these as ``measurement
noise.'' The noise $\bxi_{t+1}$ is assumed to satisfy
\be\label{eq:321}
E( \bxi_{t+1} | \F_t ) = \bz \fa t ,
\ee
\be\label{eq:322}
E( \nmeusq{ \bxi_{t+1} } | \F_t ) \leq M_t^2 ( 1 + \nmeusq{ \gJ(\bth_t) }) 
\fa t ,
\ee
for a known set of constants $M_t$. These assumptions applies to all options
studied here.

\textbf{Option 2:}
The next option is ``coordinate gradient descent'' as defined in
\cite{Wright15} and studied further in \cite{Bach-et-al-aisats19}.
At time $t$, choose an index $\kappa_{t+1} \in [d]$ at random with a
uniform probability, and independently of previous choices.
Let $\eb_{\kappa_{t+1}}$ denote the elementary unit vector with a $1$
as the $\kappa_{t+1}$-th component and zeros elsewhere,
and let $\bxi_{t+1}$ denote the measurement noise.
Then define
\be\label{eq:32}
\bphi_{t+1} = d \eb_{\kappa_{t+1} } \circ [ - \gJ(\bth_t) + \bxi_{t+1} ] .
\ee
The factor of $d$ arises because the likelihood that $\kappa_{t+1}$
equalling any one index $i \in [d]$ is $1/d$.
Thus, with the introduction of the factor $d$, it follows that if
we ignore the noise term $\bxi_{t+1}$ for the time being, then
\bd
E(\bphi_{t+1} | \F_t) = - \gJ(\bth_t) .
\ed
Note that, though $\kappa_{t+1}$ is used at time $t+1$,
it is \textit{chosen} at time $t$, that is, \textit{prior}
to the update at time $t+1$.
Hence $\kappa_{t+1} \in \M(\F_t)$.

\textbf{Option 3:}
This is option is just coordinate gradient descent along multiple
coordinates chosen at random.
At time $t$, choose $N$ different indices $\kappa_{t+1,n}$ from $[d]$
\textit{with replacement}, with each choice being independent of the rest,
and also of past choices.
Moreover, each $\kappa_{t+1,n}$ is chosen from $[d]$ with uniform probability.
Then define
\be\label{eq:33}
\bphi_{t+1} := \frac{d}{N} \sum_{n=1}^N \eb_{\kappa_{t+1}^n}
\circ [ -\gJ(\bth_t) + \bxi_{t+1,n}] ,
\ee
where $\{ \xi_{t,n} , t \geq 1 , n \in [N] \}$ is the noise sequence.
In this option, $\bphi_{t+1}$ can have \textit{up to} $N$ nonzero
components.
Because the sampling is \textit{with replacement}, there might
be some duplicated samples.
In such a case, the corresponding component of $\gJ(\bth_t)$ simply
gets counted multiple times in \eqref{eq:33}.

\textbf{Option 4:}
This is a variant of coordinate gradient descent.
At time $t+1$, let $\{ B_{t+1,i}, i \in [d] \}$  be independent Bernoulli
processes with success rate $\r_{t+1}$.
Thus
\be\label{eq:34}
\Pr \{ B_{t+1,i} = 1 \} = \r_{t+1} , \fa i \in [d] .
\ee
It is permissible for the success probability $\r_{t+1}$ to vary with time.
However, at any one time, all components must have the same success
probability.
Define
\be\label{eq:35}
\v_{t+1} := \sum_{i=1}^d \eb_i I_{ \{ B_{t+1,i} = 1 \} }  \in \bi^d .
\ee
Thus $\v_{t+1,i}$ equals $1$ if $ B_{t+1,i} = 1$, and equals $0$ otherwise.
Now define
\be\label{eq:36}
\bphi_{t+1} = \frac{1}{\r_{t+1}}  \v_{t+1} \circ [ - \gJ(\bth_t) +
\bxi_{t+1} ] .
\ee
In this option,
the search direction $\bphi_{t+1}$ can have up to $d$ nonzero components.
However, the \textit{expected} number of nonzero components is $\r_{t+1} d$.

\textbf{Options 1A through 4A:}
% This is not a single option, but includes many sub-options.
In these options, we apply Options 1 through 4, after replacing
the actual gradient $\gJ(\bth_t)$, 
by a first-order approximation, as suggested first in
\cite{Kief-Wolf-AOMS52}.
Let $\{ c_t \}$ be a predetermined sequence of ``increments'' (not to be
confused with the step size sequence $\{ \al_t \}$).
At each time $t+1$, define the approximate gradient $\y_{t+1}\in \R^d$ by
\be\label{eq:37}
y_{t+1, i} = \frac{J(\bth_t - c_t \eb_i) + \xi_{t+1,i}^-
- J(\bth_t + c_t \eb_i) - \xi_{t+1,i}^+ } {2 c_t } ,
\ee
where $\bxi_{t+1}^- , \bxi_{t+1}^+$ are zero-mean measurement noises
satisfying
\eqref{eq:322} that
\be\label{eq:327}
E( \bxi_{t+1}^- | \F_t ) = \bz \fa t ,
E( \bxi_{t+1}^+ | \F_t ) = \bz \fa t ,
\ee
\begin{eqnarray}\label{eq:328}
\max \{ E( \nmeusq{ \bxi_{t+1}^- } | \F_t ) ,
E( \nmeusq{ \bxi_{t+1}^+ } | \F_t ) \} \nonumber \\
\leq M_t^2 ( 1 + \nmeusq{ \gJ(\bth_t) } ) \fa t ,
\end{eqnarray}
for suitable constants $M_t$.

\subsection{Convergence Theorems}

\begin{theorem}\label{thm:2}
Suppose the objective function $J(\cdot)$ satisfies assumptions (J1)--(J5),
and suppose $\bphi_{t+1}$ is chosen according to
any one of Options 1 through 4.
\ben
\item
Suppose the step size sequence $\{ \al_t \}$ satisfies
\be\label{eq:323a}
\sum_{t=0}^\infty \al_t^2 < \infty ,
\sum_{t=0}^\infty \al_t^2 M_t^2 < \infty .
\ee
Then $\{ J(\bth_t) \}$, $\{ \gJ(\bth_t) \}$ and $\{ \bth_t \}$ are
all bounded almost surely.
\item If, in addition to \eqref{eq:323a}, the step size sequence $\{ \al_t \}$ 
also satisfies
\be\label{eq:324a}
\sum_{t=0}^\infty \al_t = \infty ,
\ee
then $\gJ(\bth_t) \ap \bz$ and $\r(\bth_t,S(J)) \ap 0$ as $\tai$, almost
surely as $\tai$.
\een
\end{theorem}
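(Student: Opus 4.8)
The plan is to deduce Theorem \ref{thm:2} directly from Theorem \ref{thm:1} by verifying, for each of Options 1 through 4, that the search direction satisfies (D1) and (D2), and then matching the step-size hypotheses \eqref{eq:323a}--\eqref{eq:324a} against \eqref{eq:29} and \eqref{eq:210}. The crucial simplifying observation is that in every one of Options 1 through 4 the measurement noise is zero-mean, and the scaling factors ($d$ in Option 2, $d/N$ in Option 3, $1/\r_{t+1}$ in Option 4) are chosen precisely so that $E(\bphi_{t+1}\mid\F_t)=-\gJ(\bth_t)$. Consequently $\bphit_t = E(\bphi_{t+1}\mid\F_t)+\gJ(\bth_t)=\bz$, so (D1) holds with $b_t=0$. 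This makes the second condition in \eqref{eq:29}, namely $\sum_t\al_t b_t<\infty$, hold trivially, and it annihilates every $b_t$-dependent term in the $\d_t$ and $\g_t$ built during the proof of Theorem \ref{thm:1}.

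The main work is establishing (D2), for which I would compute $E(\nmeusq{\bzeta_{t+1}}\mid\F_t)=E(\nmeusq{\bphi_{t+1}}\mid\F_t)-\nmeusq{\gJ(\bth_t)}$ option by option. For Option 1 this is immediate, since $\bzeta_{t+1}=\bxi_{t+1}$, so \eqref{eq:322} gives (D2) with $\s_t^2=M_t^2$. For Option 2, conditioning first on the uniform index $\kappa_{t+1}\in[d]$ and discarding the zero-mean noise cross terms yields $E(\nmeusq{\bphi_{t+1}}\mid\F_t)=d\,(\nmeusq{\gJ(\bth_t)}+E(\nmeusq{\bxi_{t+1}}\mid\F_t))$, whence (D2) holds with $\s_t^2=C_d(1+M_t^2)$ for a constant depending only on $d$. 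Option 3 is the same computation with the extra averaging: because the $N$ indices are drawn independently with replacement, the variance of the scaled sum splits into $N$ identical per-sample variances, producing a factor $d^2/N$ and again $\s_t^2=O(1+M_t^2)$. For Option 4, the independent Bernoulli indicators (using $v^2=v$) give $E(\nmeusq{\bphi_{t+1}}\mid\F_t)=\r_{t+1}^{-1}(\nmeusq{\gJ(\bth_t)}+E(\nmeusq{\bxi_{t+1}}\mid\F_t))$, so (D2) holds with $\s_t^2=O((1+M_t^2)/\r_{t+1})$.

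With (D1) and (D2) in hand, the step-size conditions align: $\sum_t\al_t^2<\infty$ is common to both theorems, $\sum_t\al_t b_t<\infty$ is automatic since $b_t=0$, and $\sum_t\al_t^2\s_t^2<\infty$ reduces, via $\s_t^2\leq C(1+M_t^2)$, to $\sum_t\al_t^2<\infty$ together with $\sum_t\al_t^2 M_t^2<\infty$, which is exactly \eqref{eq:323a}. Item 1 of Theorem \ref{thm:2} then follows from Item 1 of Theorem \ref{thm:1}. Adjoining $\sum_t\al_t=\infty$ from \eqref{eq:324a} (which is \eqref{eq:210}) and invoking the standing assumptions (J4) and (J5), Items 2 through 4 of Theorem \ref{thm:1} deliver $\gJ(\bth_t)\ap\bz$ and $\r(\bth_t,S(J))\ap 0$ almost surely, which is Item 2 of Theorem \ref{thm:2}.

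The one point requiring care is Option 4: because $\s_t^2$ carries a factor $1/\r_{t+1}$, the reduction of $\sum_t\al_t^2\s_t^2<\infty$ to \eqref{eq:323a} is valid only if the success probabilities $\{\r_{t+1}\}$ are bounded away from zero (or, more generally, if $\sum_t\al_t^2(1+M_t^2)/\r_{t+1}<\infty$), and I would make this hypothesis explicit. Beyond that, the per-option variance bookkeeping---correctly tracking the Hadamard products, the independence of the coordinate selections, and the vanishing of the noise cross terms---is the only genuinely computational part, and is where I expect the bulk of the routine effort to lie.
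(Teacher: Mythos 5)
Your proposal follows essentially the same route as the paper: verify that each of Options 1--4 satisfies (D1) with $b_t=0$ (because the noise is zero-mean and the scaling factors make $E(\bphi_{t+1}\mid\F_t)=-\gJ(\bth_t)$), compute the conditional variance option by option to get (D2) with $\s_t^2 = O(1+M_t^2)$ (with a $1/\r$ factor in Option 4), and then read off the conclusions from Theorem \ref{thm:1}. Your explicit caveat that Option 4 requires $\r_{t+1}$ bounded away from zero is a point the paper leaves implicit, but it does not change the argument.
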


Note that if $M_t \leq M$, a fixed constant for all $t$, then the two conditions
in \eqref{eq:323a} are equivalent.
Moreover, taken together, \eqref{eq:323a} and \eqref{eq:324a} become
the well-known Robbins-Monro conditions \cite{Robbins-Monro51}.

\begin{theorem}\label{thm:3}
Suppose the objective function $J(\cdot)$ satisfies assumptions (J1)--(J5),
and suppose we apply batch updating with any one of Options 1A through 4A.
Under these conditions,
\ben
\item
Suppose that the step size sequence $\{ \al_t \}$ satisfies
\be\label{eq:3212}
\sum_{t=0}^\infty \al_t^2 < \infty , \sum_{t=0}^\infty \al_t c_t < \infty ,
\sum_{t=0}^\infty ( \al_t M_t  /c_t)^2 < \infty .
\ee
Then $\{ J(\bth_t) \}$, $\{ \gJ(\bth_t) \}$ and $\{ \bth_t \}$ are
all bounded almost surely.
\item If, in addition to \eqref{eq:3212}, we have that
\be\label{eq:3213}
\sum_{t=0}^\infty \al_t = \infty ,
\ee
then $\gJ(\bth_t) \ap \bz$ and $\r(\bth_t,S(J)) \ap 0$ as $\tai$, almost
surely as $\tai$.
\een
\end{theorem}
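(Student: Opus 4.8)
The plan is to deduce Theorem~\ref{thm:3} from Theorem~\ref{thm:1}, exactly as Theorem~\ref{thm:2} does for Options~1--4: I would show that each of Options~1A--4A meets the search-direction assumptions (D1) and (D2) with explicit $b_t$ and $\s_t^2$, and then verify that \eqref{eq:3212}--\eqref{eq:3213} force the hypotheses \eqref{eq:29}--\eqref{eq:210}. The new feature relative to Theorem~\ref{thm:2} is that the finite-difference construction \eqref{eq:37} introduces a systematic bias, which feeds $b_t$ and hence (D1), and inflates the measurement variance by a factor $1/c_t^2$, which feeds $\s_t^2$ and hence (D2).

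First I would dissect the approximate gradient $\y_{t+1}$ of \eqref{eq:37}. Applying Lemma~\ref{lemma:2} at $\bth_t \pm c_t\eb_i$, together with the companion lower bound supplied by the global Lipschitz property of $\gJ(\cdot)$ in (J1), gives $|J(\bth_t \pm c_t\eb_i) - J(\bth_t) \mp c_t[\gJ(\bth_t)]_i| \leq Lc_t^2$ coordinatewise. Hence the noise-free central difference reproduces $-[\gJ(\bth_t)]_i$ with error at most $Lc_t$ per coordinate, so I can write $\y_{t+1} = -\gJ(\bth_t) + \bbeta_t + \bzeta^{\circ}_{t+1}$, where the bias $\bbeta_t \in \M(\F_t)$ satisfies $\nmeu{\bbeta_t} \leq \sqrt{d}\,Lc_t$ and the measurement term $\bzeta^{\circ}_{t+1} := (\bxi^-_{t+1} - \bxi^+_{t+1})/(2c_t)$ is conditionally zero-mean by \eqref{eq:327}, with $E(\nmeusq{\bzeta^{\circ}_{t+1}} \mid \F_t) \leq M_t^2(1+\nmeusq{\gJ(\bth_t)})/c_t^2$ by \eqref{eq:328} and $\nmeusq{\a-\b}\leq 2\nmeusq{\a}+2\nmeusq{\b}$.

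Next I would push this decomposition through the selection maps of the four options---the identity for 1A, and the rescaled masks $d\,\eb_{\kappa_{t+1}}$, $(d/N)\sum_{n}\eb_{\kappa_{t+1}^n}$ and $\r_{t+1}^{-1}\v_{t+1}$ for 2A--4A. Each mask is conditionally unbiased and independent of the measurement noise, so the conditional mean of $\bphi_{t+1}$ is $-\gJ(\bth_t)+\bbeta_t$ in every case; by \eqref{eq:24} this gives $\bphit_t=\bbeta_t$, establishing (D1) with $b_t = \sqrt{d}\,Lc_t$ up to an option-dependent constant. For (D2) I would expand $E(\nmeusq{\bphi_{t+1}}\mid\F_t)$: the coordinate-selection variance contributes a term proportional to $\nmeusq{-\gJ(\bth_t)+\bbeta_t}\leq 2\nmeusq{\gJ(\bth_t)}+2dL^2c_t^2$, while the measurement part contributes the inflated noise $E(\nmeusq{\bzeta^{\circ}_{t+1}}\mid\F_t)$ scaled by the selection factor. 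Bounding every $t$-dependent coefficient by $(1+\nmeusq{\gJ(\bth_t)})$ then yields (D2) with $\s_t^2 = A_1 + A_2 c_t^2 + A_3 M_t^2/c_t^2$ for option-dependent constants $A_1,A_2,A_3$.

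The main obstacle is precisely this (D2) computation for the batch options 2A--4A, where two independent randomness sources---the coordinate mask and the finite-difference noise---combine: one must check that, even after the $1/c_t^2$ inflation of the noise, every cross-term in $\nmeusq{\bphi_{t+1}}$ remains dominated by a multiple of $(1+\nmeusq{\gJ(\bth_t)})$ (for 4A one additionally carries the factor $\r_{t+1}^{-1}$, which is harmless as long as the success rates are bounded away from $0$). Once $b_t=O(c_t)$ and $\s_t^2=O(1)+O(c_t^2)+O(M_t^2/c_t^2)$ are secured, the hypothesis translation is routine: $\sum_t\al_t b_t<\infty$ follows from $\sum_t\al_t c_t<\infty$, and $\sum_t\al_t^2\s_t^2<\infty$ splits as $A_1\sum_t\al_t^2 + A_2\sum_t\al_t^2 c_t^2 + A_3\sum_t(\al_t M_t/c_t)^2$, each finite under \eqref{eq:3212} (the middle sum is dominated by $\sum_t\al_t^2$ since $c_t$ is bounded). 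Together with $\sum_t\al_t^2<\infty$ and $\sum_t\al_t=\infty$ these are exactly \eqref{eq:29} and \eqref{eq:210}; since $J(\cdot)$ obeys (J1)--(J5), Items~1, 3 and 4 of Theorem~\ref{thm:1} then yield the almost-sure boundedness of Item~1 and the convergences $\gJ(\bth_t)\ap\bz$ and $\r(\bth_t,S(J))\ap 0$ of Item~2.
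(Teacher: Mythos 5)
Your proposal is correct and follows essentially the same route as the paper: the paper's Lemma~\ref{lemma:3} establishes exactly your two key bounds, namely the bias bound $\nmeu{\bphit_t} \leq \sqrt{d}\,L c_t$ (via the Lipschitz property of $\gJ$, proved there with an integral representation of the central difference rather than your two-sided application of Lemma~\ref{lemma:2}, an equivalent computation) and the $1/c_t^2$-inflated conditional variance, after which the batch-selection variances from Options 2--4 are added and Theorem~\ref{thm:1} is invoked under \eqref{eq:3212}--\eqref{eq:3213}. Your treatment of the mask/noise interaction in Options 2A--4A is in fact spelled out more explicitly than in the paper, which leaves that combination implicit.
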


Note that if $M_t \leq M$, a fixed constant for all $t$, then the conditions
in \eqref{eq:3212} become the so-called ``Blum conditions,'' first proposed
in \cite{Blum54}.

\subsection{Analysis of Various Batch Updating Options}\label{ssec:32}

In this subsection, we analyze the various batch updating options
in Section \ref{ssec:31}.
To keep the notation from getting overly cumbersome, it is assumed that in
Options 1 through 4, the noise $\bxi_{t+1}$ satisfies
\be\label{eq:321}
E( \bxi_{t+1} | \F_t ) = \bz \fa t ,
\ee
\be\label{eq:322}
E( \nmeusq{ \bxi_{t+1} } | \F_t ) \leq M_t^2 ( 1 + \nmeusq{ \gJ(\bth_t) } 
\fa t ,
\ee
for a known set of constants $M_t$.
With these assumptions, each of Options 1 through 4, ``on average''
the search direction $\bphi_{t+1}$ equals $- \gJ(\bth_t)$.
Therefore Assumption (D1) is satisfied with $b_t = 0$ for all $t$.
This is \textit{not true} in Options 1A through 4A.
The discussion below will reveal that more general models can be
accommodated at the expense of more cumbersome notation.

In \textbf{Option 1}, it is easy to see that
\bd
E( \bphi_{t+1} | \F_t ) = - \gJ(\bth_t) ,
\bphit_t = \bz , \fa t .
\ed
Therefore Assumption (D1) is satisfied with $b_t = 0$ and
$M_t^2$ as in \eqref{eq:322}.

In \textbf{Option 2}, there are two factors contributing to the error
$\bzeta_{t+1}$.
First, even if $\bxi_{t+1} = \bz$, the search direction $\bphi_{t+1}$ equals
$- \gJ(\bth_t)$ only ``on average,'' and there is a variance due to this.
Second, the variance due to the noise $\bxi_{t+1}$ also needs to be
taken into account.

Let $i$ denote $\kappa_{t+1}$.
Then
\bd
\phi_{t+1,j} = \left\{ \ba{ll}
- (d-1) ( \gJ(\bth_t) )_j , & \mbox{if } j = i , \\
( \gJ(\bth_t) )_j , & \mbox{if } j \neq i . \ea \right.
\ed
Therefore
\bd
\sum_{j=1}^d ( \phi_{t+1,j} - ( \gJ(\bth_t) )_j )^2 =
(d-1)^2 ( \gJ(\bth_t)_i )^2 + \sum_{j \neq i} ( \gJ(\bth_t)_j )^2 .
\ed
\bd
\nmeusq{ \bphi_{t+1} - \gJ(\bth_t) } = \sum_{j=1}^d 
[ (d-1)^2 ( \gJ(\bth_t)_i )^2 + \sum_{j \neq i} ( \gJ(\bth_t)_j )^2 ] .
\ed
Now $\kappa_{t+1} = i$ with probability $1/d$.
Therefore the conditional probability 
\begin{eqnarray*}
E( \nmeusq{ \bphi_{t+1} - \gJ(\bth_t) } | \F_t) & = &
\frac{1}{d} \sum_{i=1}^d \sum_{j=1}^d
[ (d-1)^2 ( \gJ(\bth_t)_i )^2 + \sum_{j \neq i} ( \gJ(\bth_t)_j )^2 ] \\
& = &  \nmeusq{ \gJ(\bth_t) } \left[ \frac{ (d-1)^2 + (d-1) }{d} \right]
\nonumber \\
& = & (d-1) \nmeusq{ \gJ(\bth_t) } .
\end{eqnarray*}
Since
\bd
E( \nmeusq{\bzeta_{t+1}} | \F_t )
= E( \nmeusq{ \bphi_{t+1} - \gJ(\bth_t) } | \F_t),
\ed
if $\bxi_{t+1} \neq \bz$, its conditional variance simply adds to the above.
Therefore (D2) is satisfied with
\bd
\s_t^2 = (d-1) + M_t^2 .
\ed

In \textbf{Option 3}, $\bphi_{t+1}$ is the average of $N$ different quantities
wherein the error terms $\bzeta_{t+1}^n , n \in [N]$ are independent.
Therefore their variances just add up, an again Assumption (D2)
holds with $\s_t^2 = (d-1) + M_t^2$.

Next we come to \textbf{Option 4}.
For notational simplicity, we just use $\r$ in the place of $\r_{t+1}$.
In this case, each component $\phi_{t+1,i}$ equals $- (1/\r) \gJ(\bth_t)_i$
with probability $\r$, and $0$ with probability $1-\r$.
Thus $\zeta_{t+1,i}$ equals $1-(1/\r) \gJ(\bth_t)_i$ with probability $\r$,
and $\gJ(\bth_t)_i$ with probability $1-\r$.
As can be easily verified, the variance is
$(1-\r)/\r$ for each component.
As the Bernoulli processes for each component are mutually independent,
the variances simply add up.
It follows that
\bd
E( \nmeusq{\bzeta_{t+1}} | \F_t ) = \frac{d(1-\r)}{\r} .
\ed
Hence Assumption (D2) holds with 
\bd
\s_t^2 = \frac{d(1-\r)}{\r} + M_t^2 .
\ed

Finally we come to \textbf{Option 1A through 4A}, which is distinguished by the fact that
we cannot take $b_t = 0$ in general.
% Define, for each $i \in [d]$, the ratio
Define $\z_{t+1}$ by
\be\label{eq:325}
z_{t,i} = \frac{J(\bth_t + c_t \eb_i) - J(\bth_t - c_t \eb_i)}{2 c_t } ,
\fa i \in [d] .
\ee
Then it is clear that the search direction $\y_{t+1}$ in \eqref{eq:37}
is given by
\be\label{eq:326}
\y_{t+1} = - \z_t + \frac{\bxi_t^- - \bxi_t^+ }{2c_t} .
\ee
% To continue the analysis, suppose in analogy with \eqref{eq:321} and
% \eqref{eq:322} that
Recall from \eqref{eq:327} and \eqref{eq:328} that
\be\label{eq:327a}
E( \bxi_{t+1}^- | \F_t ) = \bz \fa t ,
E( \bxi_{t+1}^+ | \F_t ) = \bz \fa t ,
\ee
\be\label{eq:328a}
\max \{ E( \nmeusq{ \bxi_{t+1}^- } | \F_t ) ,
E( \nmeusq{ \bxi_{t+1}^+ } | \F_t ) \}
\leq M_t^2 ( 1 + \nmeusq{ \gJ(\bth_t) } ) \fa t ,
\ee
for suitable constants $M_t$.

\begin{lemma}\label{lemma:3}
With $\y_{t+1}$ defined as in \eqref{eq:326}, define, in analogy with
\eqref{eq:24} and \eqref{eq:25},
\be\label{eq:327b}
\bphit_t = E( \nmeu{\y_{t+1}} | \F_t ) + \gJ(\bth_t) ,
\ee
\be\label{eq:329}
\bzeta_{t+1} = \y_{t+1} - E( \nmeu{\y_{t+1} } | \F_t ) .
\ee
Then
\be\label{eq:3210}
\nmeusq{\bphit_t} \leq \sqrt{d} L c_t ,
\ee
\be\label{eq:3211}
E( \nmeusq{\bzeta_{t+1} } | \F_t ) \leq \frac{M_t^2}{2c_t^2} 
\ee
\end{lemma}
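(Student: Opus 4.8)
The plan is to separate the deterministic ``bias'' of the finite-difference estimate from its zero-mean noise, and treat the two displayed bounds independently. From \eqref{eq:326} we have $\y_{t+1} = -\z_t + (\bxi_{t+1}^- - \bxi_{t+1}^+)/(2c_t)$, where $\z_t \in \M(\F_t)$ since it depends only on $\bth_t$ and the deterministic increment $c_t$. Taking conditional expectations and using the zero-mean property \eqref{eq:327a} gives $E(\y_{t+1} \mid \F_t) = -\z_t$. Substituting this into the definitions \eqref{eq:327b}, \eqref{eq:329} (read with the vector conditional expectation $E(\y_{t+1}\mid\F_t)$ in place of the scalar $\nmeu{\y_{t+1}}$ that appears there) yields the clean decomposition $\bphit_t = \gJ(\bth_t) - \z_t$ and $\bzeta_{t+1} = (\bxi_{t+1}^- - \bxi_{t+1}^+)/(2c_t)$. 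This reduces \eqref{eq:3210} to a deterministic finite-difference error bound and \eqref{eq:3211} to a routine variance computation.

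For \eqref{eq:3210}, the key is a two-sided quadratic estimate for the central difference $z_{t,i}$. Lemma \ref{lemma:2} gives the upper bound $J(\bth_t \pm c_t \eb_i) \leq J(\bth_t) \pm c_t (\gJ(\bth_t))_i + L c_t^2$; applying the same lemma to $-J$, whose gradient $-\gJ$ is again $2L$-Lipschitz, supplies the matching lower bound $J(\bth_t \pm c_t \eb_i) \geq J(\bth_t) \pm c_t (\gJ(\bth_t))_i - L c_t^2$. Forming $J(\bth_t + c_t\eb_i) - J(\bth_t - c_t\eb_i)$, the two $J(\bth_t)$ terms cancel exactly and the linear terms add, leaving $|J(\bth_t + c_t\eb_i) - J(\bth_t - c_t\eb_i) - 2c_t(\gJ(\bth_t))_i| \leq 2Lc_t^2$. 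Dividing by $2c_t$ gives the componentwise bound $|z_{t,i} - (\gJ(\bth_t))_i| \leq Lc_t$, hence $|(\bphit_t)_i| \leq Lc_t$ for every $i \in [d]$. Summing over the $d$ coordinates yields $\nmeu{\bphit_t} \leq \sqrt{d}\,Lc_t$, establishing \eqref{eq:3210} and showing that (D1) holds with $b_t = \sqrt{d}\,Lc_t$.

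For \eqref{eq:3211}, I would compute directly from $\bzeta_{t+1} = (\bxi_{t+1}^- - \bxi_{t+1}^+)/(2c_t)$ that $E(\nmeusq{\bzeta_{t+1}} \mid \F_t) = \frac{1}{4c_t^2} E(\nmeusq{\bxi_{t+1}^- - \bxi_{t+1}^+} \mid \F_t)$. Expanding the squared norm and invoking the conditional uncorrelatedness of the two measurement noises to drop the cross term $E(\IP{\bxi_{t+1}^-}{\bxi_{t+1}^+} \mid \F_t)$, the two remaining terms are each bounded by $M_t^2(1 + \nmeusq{\gJ(\bth_t)})$ via \eqref{eq:328a}; this gives $E(\nmeusq{\bzeta_{t+1}} \mid \F_t) \leq \frac{M_t^2}{2c_t^2}(1 + \nmeusq{\gJ(\bth_t)})$, which is \eqref{eq:3211} with the $(1 + \nmeusq{\gJ(\bth_t)})$ factor of (D2) made explicit. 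The $1/c_t^2$ is precisely the announced blow-up of the noise variance as $c_t \ap 0$.

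The routine part is \eqref{eq:3211}; the crux is \eqref{eq:3210}. The main obstacle I anticipate is that Lemma \ref{lemma:2} supplies only the \emph{upper} half of the quadratic sandwich, so one must separately apply it to $-J$ to obtain the lower bound, and then combine the two halves through the difference $J(\bth_t+c_t\eb_i)-J(\bth_t-c_t\eb_i)$ so that the curvature remainders add to exactly $2Lc_t^2$ while the zeroth-order terms cancel. A lesser point is the correlation structure of $\bxi_{t+1}^-$ and $\bxi_{t+1}^+$ in \eqref{eq:3211}: if they are not conditionally uncorrelated, one simply replaces the cancellation step by $\nmeusq{\bxi_{t+1}^- - \bxi_{t+1}^+}\leq 2\nmeusq{\bxi_{t+1}^-}+2\nmeusq{\bxi_{t+1}^+}$, which changes only the constant and leaves the essential $1/c_t^2$ dependence intact.
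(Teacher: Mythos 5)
Your proposal is correct, and it matches the paper's proof in its overall architecture: the same decomposition $\bphit_t = \gJ(\bth_t) - \z_t$, $\bzeta_{t+1} = (\bxi_{t+1}^- - \bxi_{t+1}^+)/(2c_t)$ (reading the conditional expectation in \eqref{eq:327b}--\eqref{eq:329} as the vector $E(\y_{t+1}\mid\F_t)$, as the paper clearly intends), and the same variance computation for \eqref{eq:3211}, where the paper likewise drops the cross term by independence of $\bxi^-_{t+1}$ and $\bxi^+_{t+1}$ and arrives at the constant $M_t^2/(2c_t^2)$; like you, the honest bound carries the factor $(1+\nmeusq{\gJ(\bth_t)})$, which the paper's lemma statement silently suppresses but which is harmless for (D2). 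Where you genuinely diverge is the componentwise bias bound $|z_{t,i} - (\gJ(\bth_t))_i| \leq Lc_t$: the paper writes $z_{t,i}$ as $\frac{1}{2c_t}\int_{-1}^1 \frac{d}{d\l} J(\bth_t + \l c_t\eb_i)\,d\l$, splits off $\IP{\eb_i}{\gJ(\bth_t)}$, and bounds the remainder by the $2L$-Lipschitz property of $\gJ$ under the integral sign, whereas you apply the descent lemma (Lemma \ref{lemma:2}) to both $J$ and $-J$ to get a two-sided quadratic sandwich and then cancel the zeroth-order terms in the central difference. The two arguments are equivalent in substance (the descent lemma is itself proved by the same integral representation), but your route buys two small things: it makes the \emph{two-sided} nature of the bound explicit, which is what $\nmeu{\bphit_t}\leq\sqrt{d}Lc_t$ actually requires and which the paper glosses over by deriving only the upper inequality $z_{t,i}\leq[\gJ(\bth_t)]_i + Lc_t$; and it sidesteps the paper's evaluation of $\half\int_{-1}^1 2L\l c_t\,d\l$, which should involve $|\l|$ to give $Lc_t$ rather than $0$. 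Your fallback for correlated noises at the end is also a sensible robustness remark not present in the paper.
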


\begin{proof}
By \eqref{eq:327}, it follows that
\bd
E(\y_{t+1} | \F_t ) = - \z_t , \bphit_t = - \z_t + \gJ(\bth_t) .
\ed
Next, we adapt the proof of Lemma \ref{lemma:2}
taken from \cite{Ber-Tsi-SIAM00}
to the problem at hand, and write
\begin{eqnarray*}
z_{t,i} & = & \frac{J(\bth_t + c_t \eb_i) - J(\bth_t - c_t \eb_i)}{2 c_t } \\
& = & \frac{1}{2 c_t} \int_{-1}^1 \frac{d}{d\l}
[ J(\bth_t + \l c_t \eb_i ) ] \; d \l \\
& = & \frac{1}{2 c_t} \int_{-1}^1 c_t \IP{\eb_i}
{\gJ(\bth_t + \l c_t \eb_i )} \; d \l \\
& = & \half \int_{-1}^1 [ \IP{\eb_i}{\gJ(\bth_t)} \nonumber \\
& + & \IP {\eb_i} { \gJ(\bth_t + \l c_t \eb_i ) - \gJ(\bth_t)} ] \; d \l .
\end{eqnarray*}
Now apply Assumption (J1) and Schwarz's inequality.
This gives
\begin{eqnarray*}
| \IP {\eb_i} { \gJ(\bth_t + \l c_t \eb_i ) - \gJ(\bth_t)} | \\
\leq \nmeu { \gJ(\bth_t + \l c_t \eb_i ) - \gJ(\bth_t)} 
\leq  2L \l c_t .
\end{eqnarray*}
Therefore
\bd
z_{t+1,i} \leq [ \gJ(\bth_t) ]_i + \half \int_{-1}^1 2L \l c_t \; d \l
= [ \gJ(\bth_t) ]_i + L c_t .
\ed
Therefore
\bd
\nmeu{\bphit_t} \leq \sqrt{d} L c_t .
\ed
This proves \eqref{eq:3210}.
To prove \eqref{eq:3211}, observe that $\bxi_{t+1}^-$ and $\bxi_{t+1}^+$
are independent.
Therefore the conditional variance of $\bzeta_{t+1}$ is the sum of the
conditional variances of these two terms, divided by $4 c_t^2$,
which is bounded by $M_t^2/2c_t^2$.
\end{proof}

\section{Computational Results}\label{sec:Comp}

To validate the theory, we have conducted the following numerical experiment:
We set
$J(\bth) = \bth^\top A \bth + \log\left(\sum_{i=0}^{1000}e^{\th_i}\right)$,
where $A$ is a $1000 \times 1000$ positive definite matrix with a condition
number of 100, whose eigenvectors are \textit{not} aligned closely with the
elementary basis vectors; that is, $A$ is far from being a diagonal matrix.
Four different updating options were implemented, namely:
Option 1 (full gradient updating), Option 4 (batch updating with $1000$
different independent Bernoulli  processes $\kappa_{t,i}$
with $\Pr \{ \kappa_{t,i} = 1 \} = \r$ for all $t,i$.
The ``rate'' $\r$ was assigned various values to measure performance.),
Option 1A and Option 4A
(Options 1 and 4 respectively, using an approximate gradient defined in
\eqref{eq:327} instead of the true gradient).
The noise was additive white Gaussian noise with various SNRs, which
determined the variance bound $M_t = M$ in \eqref{eq:328}.
The step and increment sequences were chosen to be
\bd
c_t = \frac{c_0}{(1+(t/\t))^q} , \alpha_t = \frac{\alpha_0}{(1+(t/\t))^p}  ,
\ed
where $\t = 200$, $c_0 = 0.01, \alpha_0 = 0.01$, $p = 1, q = 0.02$.
With these choices, the sufficient conditions of Theorems \ref{thm:2}
and \ref{thm:3} are satisfied. 
For comparision we set $\alpha_0$ to be same
in ADAM, NADAM, RMSPROP \cite{Tieleman-Hinton-coursera12}, and Heavy Ball (HB).
For NAG, we stick to step sequences proposed in \cite{Nesterov-Dokl83}.
The computations were conducted in \textit{python} using NumPy and PyTorch
(for GPU Acceleration) libraries.

Figure \ref{fig:3} displays the convergence of Option 4
(batch updating of a fraction $\r$ components, using noisy measurements
of the true gradient),
at 50dB SNR for various values of $\r$.
It can be seen that, with as few as 5\% of the components being updated
on average ($\r = 0.05$), the iterations converge.

Figure \ref{fig:1} shows the output of Option 1 (left side) and Option 4
with $\r = 0.2$ (right side).
Recall that Option 1 is to update \textit{all} components at each time step,
while Option 4 is to update a fraction $\r$ of the components,
using noise-corrupted true gradients in each case.
In this case, batch updating (BU) converges, albeit slowly.
Five other momentum-based (or second-order) methods fail to converge,
namely NAG which performs the worst, ADAM, NADAM and RMSPROP,
and HB which performs least badly.
Figure \ref{fig:2} shows the output of Option 1A (left side) and Option 4A
with $\r = 0.2$ (right side).
In this case, the noise-corrupted true gradients are replaced by
approximate gradients as in \eqref{eq:37}.
Here the Nesterov method actually diverges, while the others settle down
at some value of the objective function, though far from the optimum.
Batch updating converges to the optimum.

\begin{figure}[h]
\bc
\includegraphics[width=72.5mm]{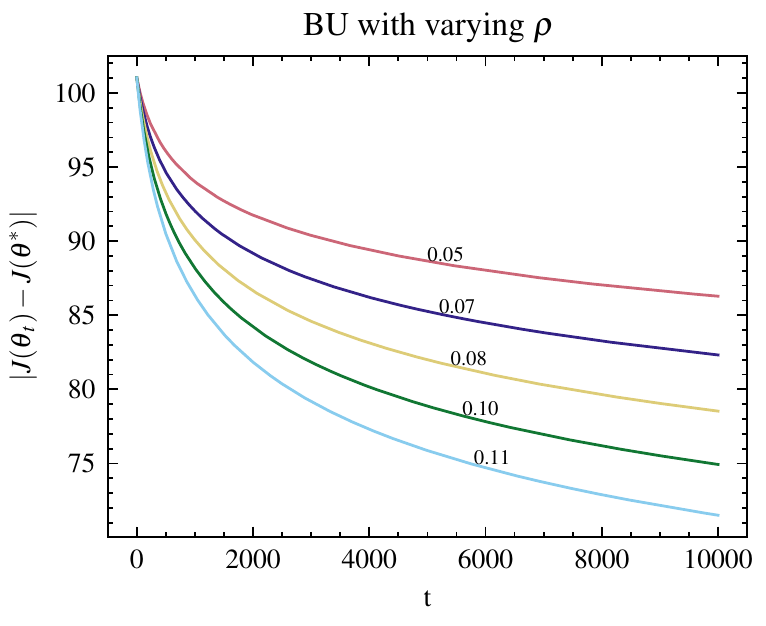}
\ec
\caption{Convergence of Batch Updating at Various Rates $\r$ and SNR = 50 dB.}
\label{fig:3}
\end{figure}

\section{Conclusions}\label{sec:Conc}

In this paper, we have presented a general and unified framework for
establishing the convergence of batch updating algorithms.
Then we have presented sufficient conditions for the convergence of
these batch updating algorithms.
Our method of proof is based on stochastic approximation theory,
specifically \cite{Robb-Sieg71}.
The results are applicable to nonconvex as well as convex objective
functions.
In particular, the search direction can be a batch-updated version of
any of the following:
\bit
\item A noisy measurement of the exact gradient. 
\item A first-order approximation of the gradient based on
noisy function measurements (also called zeroth-order or derivative-free
methods).
\eit

Numerical experiments show that when as little as 5\% of the coordinates
are updated at each iteration, and approximate gradients are used, 
batch updating
still converges, while batch momentum-based convex optimization methods
either diverge or fail to converge at quite low noise levels.
% However, when noise-corrupted exact gradients are used, both batch updating
% and batch momentum-based methods converge.
The existing convergence theory for
momentum-based algorithms does not apply to the
case of batch updating.
Extending the present theory to commonly used algorithms like ADAM/NADAM is a 
current topic of investigation.

\begin{figure*}[htbp]
\bc
\includegraphics[width=145mm]{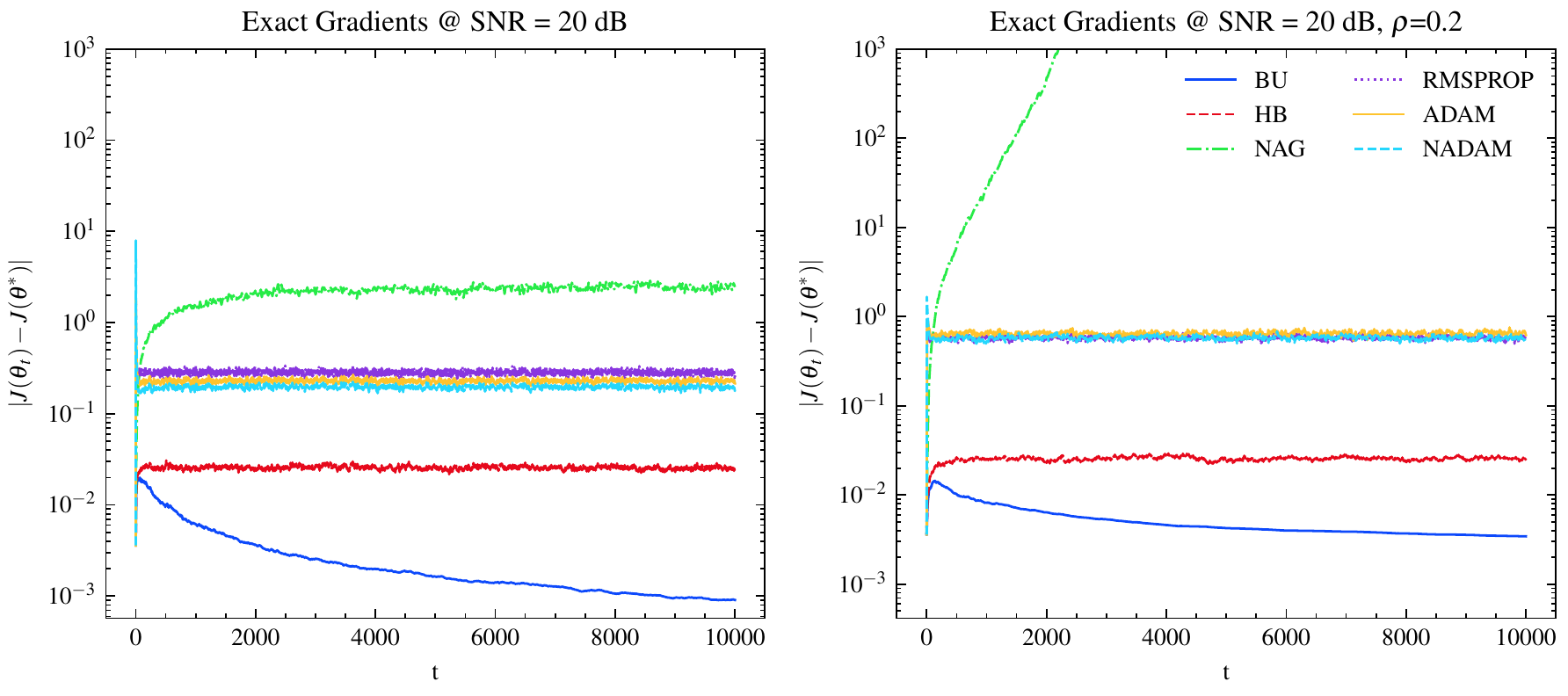}
\ec
\caption{[LEFT] Option 1; [RIGHT] Option 4.}
\label{fig:1}
\end{figure*}

\begin{figure*}[htbp]
\bc
\includegraphics[width=145mm]{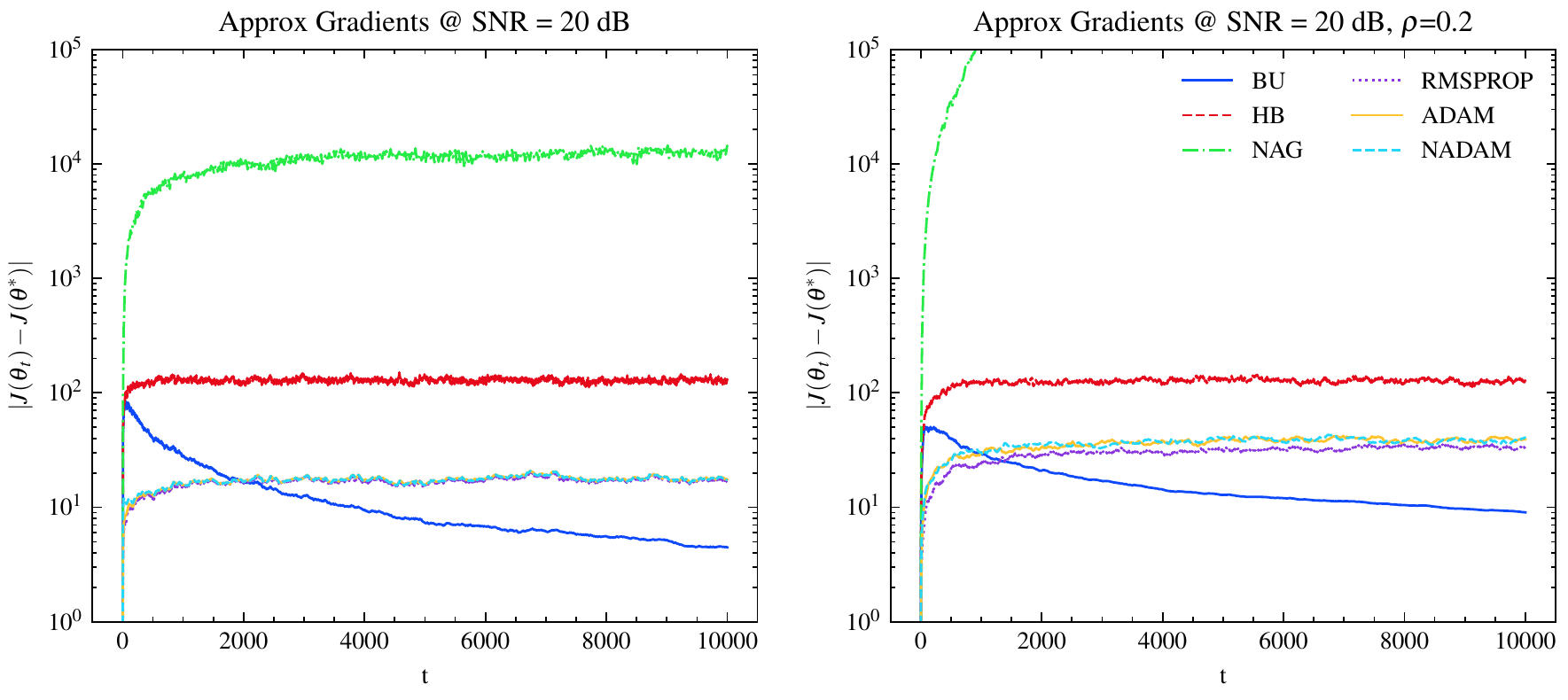}
\ec
% \caption{[LEFT] Option 5 with full update; [RIGHT] Option 5 with batch update
% (option 4,  $\rho$=0.2).}
\caption{[LEFT] Option 1A; [RIGHT] Option 4A}
\label{fig:2}
\end{figure*}

%\newpage

\bibliographystyle{ieeetr}
\bibliography{ML, Opt}

\end{document}